\newtheorem{thm}{Proposition}[section]
\newtheorem{prop}[thm]{Proposition}
\newtheorem{theorem}[thm]{Theorem}
\newtheorem{lemma}[thm]{Lemma}
\newtheorem{theorem*}[]{Theorem}
\theoremstyle{definition}
\newtheorem{defn}[thm]{Definition}
\newtheorem{rem}[thm]{Remark}
\newtheorem{exmp}[thm]{Example}
\definecolor{yellow}{rgb}{1,1,0}
\DeclareMathOperator{\BTtor}{BT_{tor}}
\DeclareMathOperator{\BK}{BK}
\DeclareMathOperator{\Max}{Max}
\DeclareMathOperator{\M}{\mathbb{M}}
\DeclareMathOperator{\coker}{coker}
\DeclareMathOperator{\BKe}{BK_1}
\DeclareMathOperator{\BKn}{BK_n}
\DeclareMathOperator{\BTn}{BT_n}
\DeclareMathOperator{\BTe}{BT_1}
\DeclareMathOperator{\BKtor}{BK_{tor}}
\DeclareMathOperator{\Lie}{Lie}
\DeclareMathOperator{\BT}{BT}
\DeclareMathOperator{\hig}{h}
\DeclareMathOperator{\Tor}{Tor}
\DeclareMathOperator{\Spec}{Spec}
\newcommand{\e}{\'{e} }
\newcommand{\Wn}{W_n }
\newcommand{\xio}{\xi_0}
\newcommand{\psio}{\overline{\psi}}
\newcommand{\varphio}{\overline{\varphi}}
\newcommand{\pb}{p^\flat}
\newcommand{\Zp}{\mathbb{Z}_p}
\newcommand{\N}{\mathbb{N}}
\newcommand{\Z}{\mathbb{Z}}
\newcommand{\Qp}{\mathbb{Q}_p}
\newcommand{\OC}{\mathcal{O}_C}
\newcommand{\OCb}{\mathcal{O}_C^\flat}
\newcommand{\im}{\mathrm{im}}
\newcommand{\id}{\mathrm{id}}
\begin{document}

\begin{center}
\Large A Note on Dieudonn\e Theory over Perfectoid Rings\\
\vspace{0.5cm}
\normalsize Timo Henkel
\footnote{Department of Mathematics, Technische Universität Darmstadt, Schloßgartenstr. 7, 64289 Darmstadt, Germany, thenkel@mathematik.tu-darmstadt.de}
\vspace{0.2cm}
\end{center} 
\noindent \textbf{Abstract}. For a perfectoid ring $R$ and a natural number $n$ we investigate the essential image of the category of truncated by $n$ Barsotti-Tate groups under the anti-equivalence between commutative, finite, locally free, $R$-group schemes of $p$-power order and torsion Breuil-Kisin-Fargues modules over $R$. We describe the associated semi-liner algebra data and show as a consequence that every $\BTn$-group over $R$ is the $p^n$-torsion of some $\BT$-group.

\begin{center}
Introduction
\end{center}

For $k$ being a perfect field of characteristic $p$, classical Dieudonn\e theory provides a semi-linear algebra description of truncated by $n$ Barsotti-Tate groups. This can be used to show that every such group arises as the $p^n$-torsion subgroup of some Barsotti-Tate group. 
Lau  extended these results to perfect rings of characteristic $p$ in \cite{L2}, reproving unpublished results of Gabber by a different method. Recently, in \cite{AB}, Anschütz and Le Bras generalized the existing instances of Dieudonn\e theory to so called Prismatic Dieudonn\e Theory, including results on Dieudonn\e theory for perfectoid rings, which were partially proven by Lau before in \cite{L1}. 
In this note, we use the results of \cite{AB} to classify $\BTn$-groups over perfectoid rings in terms of semi-linear algebra and investigate the consequences for lifting properties of $\BTn$-groups to $\BT$-groups in this setting.

 The semi-linear algebra objects associated with $\BTn$-groups are the following:
A $\BK_n$-module over a perfectoid ring $R=W(S)/ \xi$ is a triple $(M,\varphi,\psi)$, where $M$ is a finite projective $W_n(S)$-module, $\varphi \colon M^\sigma \to M$ and $\psi \colon M \to M^\sigma$ are $W_n(S)$-linear maps such that $\varphi \circ \psi = \xi$ and $ \psi \circ \varphi = \xi$. 
In the case $n=1$ one additionally demands that $\coker(\varphi)$ is finite projective as an $S/ \xi$-module and that the induced sequence $M/ \xi  \xrightarrow{\overline{\psi}} M^\sigma / \xi \xrightarrow{\overline{\varphi}} M/ \xi$ is exact. 
The category of $\BKn$-modules over $R$, denoted by $\BKn(R)$, also admits a fibrewise characterization: 
A torsion Breuil-Kisin-Fargues module over $R$ is a $\BKn$-module if and only if it is killed by $p^n$ and all fibres are truncated Dieudonn\e modules of level $n$. Combining this fact with the fibrewise description of $\BTn$-groups and the properties of the functor of \cite{AB}, which classifies commutative finite locally free $p$-groups, we show the following result:

\begin{theorem*}
Let $R$ be a perfectoid ring. For all $n \in \N $ there is an anti-equivalence of categories 
\begin{gather*}
\BTn(R) \cong \BKn(R).
\end{gather*}
\end{theorem*}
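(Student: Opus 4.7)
Write $M \colon G \mapsto M(G)$ for the contravariant anti-equivalence of \cite{AB} between commutative, finite, locally free group schemes of $p$-power order over $R$ and torsion Breuil--Kisin--Fargues modules over $R$. Since $\BTn(R)$ sits as a full subcategory of the source and $\BKn(R)$ as a full subcategory of the target, the theorem reduces to showing that $G$ lies in $\BTn(R)$ if and only if $M(G)$ lies in $\BKn(R)$.

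The strategy is to compare the two fibrewise descriptions. On the BKF side the characterization recalled in the introduction says that a torsion BKF-module is a $\BKn$-module exactly when it is killed by $p^n$ and its fibre at each point of $\Spec S$ is a truncated Dieudonn\'e module of level $n$. On the group-scheme side one has the analogous classical fact: a finite locally free commutative $p$-group scheme over $R$ is a $\BTn$-group precisely when it is killed by $p^n$ and every geometric fibre is a $\BTn$-group. The functor $M$ is additive, hence preserves annihilation by $p^n$ in both directions; so the two annihilator conditions match automatically.

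It remains to match the fibrewise conditions themselves. I would verify that $M$ commutes with base change along each specialization $R \to k$ to a perfect residue field $k$ of $S$, in the sense that $M(G)_k$ is canonically identified with the classical contravariant Dieudonn\'e module of $G_k$. Granting this compatibility --- which is essentially built into the construction in \cite{AB}, as their functor was designed to recover classical Dieudonn\'e theory over perfect fields --- classical Dieudonn\'e theory over $k$ yields that $G_k$ is a $\BTn$-group if and only if $M(G)_k$ is a truncated Dieudonn\'e module of level $n$. Applying this at every geometric fibre and combining with the two fibrewise criteria produces the required equivalence.

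The main obstacle I anticipate is precisely this base-change compatibility of the AB functor with specialization to a perfect residue field and the corresponding identification with classical Dieudonn\'e modules; one must be careful that the fibrewise characterizations, especially the exactness condition and the projectivity of $\coker(\varphi)$ appearing in the $n=1$ part of the definition of $\BKn$-modules, really can be checked fibrewise on a finite projective $W_n(S)$-module. Once this reduction to perfect residue fields is in place, the rest of the argument is a formal comparison of characterizations.
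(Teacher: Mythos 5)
Your plan is the paper's plan: restrict the anti-equivalence $\M$ of \cite{AB} to the full subcategories, match the annihilation conditions using additivity/exactness, and match the remaining conditions by comparing fibrewise characterizations on both sides via base change to the perfect residue fields, where $\M$ recovers classical Dieudonn\'e theory (the group-scheme side being handled by de Jong's fibrewise exactness criterion \cite{AJ}, and the perfect-field case by \cite{L2}, Theorem 6.4). The gap is that you treat the fibrewise characterization of $\BKn(R)$ as something ``recalled in the introduction,'' i.e.\ as citable input. It is not: it is the main technical assertion the paper has to prove, and your proposal only names it as ``the main obstacle I anticipate'' without supplying an argument. As it stands, the proof reduces one unproved equivalence (the theorem) to another unproved equivalence (the fibrewise criterion on the module side).

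Concretely, two separate arguments are missing. For $n=1$, one must show that if $M\in\BKtor(R)$ is killed by $p$ and all fibres $M\otimes_S\kappa(s)$ are Dieudonn\'e spaces, then $\coker(\varphi)$ is finite projective over $S/\xio$ and the sequence $M/\xio\to M^\sigma/\xio\to M/\xio$ is exact; the paper does this by a direct linear-algebra lemma on four-term periodic complexes of finite projective modules (first proving injectivity of $\im(\varphi)\otimes\kappa(x)\to M\otimes\kappa(x)$ by a small diagram chase, hence projectivity of the cokernel, then concluding exactness by Nakayama). For $n\geq 2$ the issue is different from the one you name: the definition of a $\BKn$-module imposes no cokernel or exactness condition, only that $M$ be finite projective over $W_n(S)$, and one must deduce this from finite projectivity of all fibres $M\otimes_{W_n(S)}W_n(\kappa(s))$. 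The paper's route is the nontrivial step you do not anticipate: reduce by the local flatness criterion to $\Tor_1^{W_n(S)}(S,M)=0$, i.e.\ to exactness of $M\xrightarrow{\cdot p^{n-1}}M\xrightarrow{\cdot p}M$, and verify this exactness by transporting the sequence back to the group-scheme side through the exact, base-change-compatible functor $\M$ and applying de Jong's criterion there. Without these two lemmas (or substitutes for them), the ``formal comparison of characterizations'' in your last step has nothing to compare against on the module side.
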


We apply lifting arguments to obtain the following corollary:
\begin{theorem*}
Let $R$ be a perfectoid ring and $n \in \N$. The truncation functor 
\begin{gather*}
\BT(R) \to \BTn(R)
\end{gather*}
is essentially surjective, i.e. every $\BTn$-group over $R$ arises as the $p^n$-torsion subgroup of some Barsotti-Tate group over $R$.
\end{theorem*}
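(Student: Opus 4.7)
The strategy is to reduce the essential surjectivity of $\BT(R) \to \BTn(R)$ to a corresponding lifting statement for Breuil--Kisin--Fargues modules, and then prove the latter by induction on the truncation level.

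Combining the first theorem above with the anti-equivalence $\BT(R) \cong \BK(R)$ of Ansch\"utz--Le Bras \cite{AB}, where $\BK(R)$ denotes the category of Breuil--Kisin--Fargues modules over $R$, the truncation $\BT(R) \to \BTn(R)$ is identified with the reduction $\BK(R) \to \BKn(R)$, $(M,\varphi,\psi) \mapsto (M/p^n, \varphi, \psi)$. Thus it suffices to show that every $\BKn$-module admits a lift to a $\BK$-module. The plan is to build a compatible tower $(M_k, \varphi_k, \psi_k)_{k \geq n}$ of $\BK_k$-modules with $M_{k+1}$ reducing to $M_k$; the inverse limit $M := \varprojlim_k M_k$ is then a finite projective $W(S)$-module (by $p$-adic completeness of $W(S)$ and the standard lifting of projective modules along nilpotent-kernel surjections), and the structure maps pass to the limit to yield the desired $\BK$-module.

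The key inductive step is the essential surjectivity of $\BK_{k+1}(R) \to \BK_k(R)$ for $k \geq 1$. Given a $\BK_k$-module $(M_k, \varphi_k, \psi_k)$, one first lifts $M_k$ to a finite projective $W_{k+1}(S)$-module $M_{k+1}$ (the kernel of $W_{k+1}(S) \to W_k(S)$ is $p^k W_{k+1}(S) \cong S$, a square-zero ideal when $k \geq 1$), and then chooses arbitrary $W_{k+1}(S)$-linear lifts $\tilde\varphi, \tilde\psi$ of $\varphi_k, \psi_k$ using projectivity. Writing
\[
\tilde\varphi \tilde\psi - \xi = p^k \alpha, \qquad \tilde\psi \tilde\varphi - \xi = p^k \beta,
\]
with $\alpha \in \operatorname{End}_S(M_1)$ and $\beta \in \operatorname{End}_S(M_1^\sigma)$, the task becomes finding corrections $b : M_1^\sigma \to M_1$ and $c : M_1 \to M_1^\sigma$ such that $(\tilde\varphi + p^k b)(\tilde\psi + p^k c) \equiv \xi$ and $(\tilde\psi + p^k c)(\tilde\varphi + p^k b) \equiv \xi$ modulo $p^{k+1}$. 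Since $p^{2k} \equiv 0 \pmod{p^{k+1}}$ for $k \geq 1$, this collapses to the linear system
\begin{align*}
\varphi_1 c + b \psi_1 &= -\alpha, \\
\psi_1 b + c \varphi_1 &= -\beta.
\end{align*}

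I expect the main obstacle to be showing that this system is solvable. Computing the triple products $\tilde\varphi \tilde\psi \tilde\varphi$ and $\tilde\psi \tilde\varphi \tilde\psi$ in two ways forces the compatibilities $\alpha \varphi_1 = \varphi_1 \beta$ and $\psi_1 \alpha = \beta \psi_1$, which should ensure that the obstruction class for the joint system lies in a vanishing $\operatorname{Ext}$-group. The vanishing itself is expected to follow from the $\BK_1$-structure on $(M_1, \varphi_1, \psi_1)$ -- in particular, the exactness of $M_1/\xi \xrightarrow{\overline{\psi}} M_1^\sigma/\xi \xrightarrow{\overline{\varphi}} M_1/\xi$ together with the finite projectivity of $\coker(\overline{\varphi})$ over $S/\xi$ -- which splits the relevant short exact sequences after reduction modulo $\xi$. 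Each relation individually is easily enforced by modifying one of the two lifts; the delicate point is enforcing both simultaneously, and this is where the fibrewise truncated Dieudonn\'e structure enters essentially.
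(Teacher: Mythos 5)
Your reduction of the statement to essential surjectivity of $\BK(R)\to\BKn(R)$ is exactly the paper's first move, and your algebraic setup for the inductive step (square-zero kernel of $W_{k+1}(S)\to W_k(S)$, error terms $p^k\alpha$, $p^k\beta$, the compatibilities $\alpha\varphi_1=\varphi_1\beta$ and $\psi_1\alpha=\beta\psi_1$ from the triple products, and the resulting linear system for the corrections $b,c$) is correct. But the proof has a genuine gap precisely where all the content of the theorem sits: you never prove that the linear system
\begin{gather*}
\varphi_1 c + b\psi_1 = -\alpha, \qquad \psi_1 b + c\varphi_1 = -\beta
\end{gather*}
is solvable. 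The phrases ``I expect,'' ``should ensure,'' and ``is expected to follow'' flag this yourself. The vanishing of the obstruction is not a formal consequence of the two compatibility relations; it genuinely uses the $\BK_1$-structure of $M/p$ (which you would also need to verify is a $\BK_1$-module, via Remark \ref{shorten} and the equivalence, or Lemma \ref{characterization of BKn}), and the only way I see to solve the system is to choose a splitting $M_1=L\oplus P$, $M_1^\sigma=\psi_1(L)\oplus Q$ adapted to $\varphi_1$ and $\psi_1$ and solve blockwise. That splitting is exactly a \emph{normal representation} in the sense of the paper, so completing your argument forces you to prove the paper's key lemma anyway.

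The paper's route is different and shorter: it shows directly that every $\BKn$-module admits a normal representation $(L,P,\Phi)$ with $M=L\oplus P$, $\Phi\colon M\to M^\sigma$ an isomorphism, $\psi=\alpha\oplus\xi\beta$ and $\varphi=\xi\alpha^{-1}\oplus\beta^{-1}$. This is obtained by splitting the reduction of $(M,\varphi,\psi)$ modulo $I=(p,\xi)$ and lifting the direct summands and the isomorphism along the Henselian pair $(W_n(S),I)$ using Nakayama. Once a normal representation exists, lifting to \emph{any} level $n\geq m$, including $n=\infty$, is a one-line argument: lift $L$, $P$ and $\Phi$ separately (projective modules and isomorphisms lift along $W_n(S)\to W_m(S)$, whose kernel is nilpotent or, for $n=\infty$, complete) and read off $\tilde\varphi,\tilde\psi$ from the formulas. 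This bypasses both your obstruction calculus and the inverse limit over $k$. In short: your skeleton is viable, but the missing solvability argument is the theorem, and the efficient way to supply it is the normal-representation lemma that the paper proves.
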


In the first chapter we recall the definition and general facts about perfectoid rings. Over such a ring $R$ we briefly recall the relevant categorical anti-equivalence $\M$ of \cite{AB} which is the starting point of our investigation (cf. \ref{equivalence for BTtor}). In the next chapter we turn attention to the category of $\BTe$-groups and $\BKe$-modules. After proofing the fibrewise characterization (cf. \ref{characterization of BKe}) we establish Theorem 1 for $n=1$ (cf. \ref{BTe=BKe}). In the last chapter, we explain analogous results for $\BTn$-groups (cf. \ref{BTn=BKn}) and finally show that, with this description at hand, we can use lifting of normal representations to proof Theorem 2 (cf \ref{BT to BTn surjective}).

\vspace{0.5 cm}

\textit{Acknowledgements:}
The author thanks E. Lau and T. Wedhorn for helpful discussions.

\tableofcontents

\vspace{2cm}
 Notations and assumptions:
\begin{enumerate}
\item[$\cdot$] Once and for all we fix some prime number $p$.
\item[$\cdot$] All rings are commutative with $1$.
\item[$\cdot$] For a ring $R$ with $pR=0$ we denote by $W(R)$ the ring of $p$-typical Witt vectors of $R$ and for $n \in \N$ by $W_n(R)$ the ring of $p$-typical Witt vectors truncated by $n$. 
\item[$\cdot$] For a ring $R$ with $pR=0$ we denote the Frobenius morphism $x \mapsto x^p$ by $\sigma$. By abuse of notation we also denote the induced Frobenius morphism on $W(S)$ and on $W_n(S)$ by $\sigma$. 
\end{enumerate}

\section{Prerequisits}

We recall some general facts about perfectoid rings and briefly state the anti-equivalences obtained in \cite{L1} (§9 and §10), \cite{AB} (§ 4 and §5.1) and \cite{SW1} (Theorem 17.5.2) which characterize Barsotti-Tate groups and $p$-groups over perfectoid rings in terms of semilinear algebra.
\subsection{Generalities on perfectoid rings}

We use the definition of perfectoid rings as quotients of perfect prisms as used in \cite{L1}.

\begin{defn}
\begin{enumerate}
\item A ring $R$ with $pR=0$ is called \emph{perfect} if the Frobenius morphism $\sigma \colon R \to R, \  x \mapsto x^p$ is bijective.
\item For a perfect ring $S$ an element $\xi=(\xi_0,\xi_1,...) \in W(S)$ is called \emph{distinguished} if $\xi_1 \in S^\times$ and $S$ is $\xi_0$-adically complete.
\item A ring $R$ is called \emph{perfectoid} if there is an isomorphism $R \cong W(S) / \xi$, where $S$ is a perfect ring and $\xi \in S$ is a distinguished element.
\end{enumerate}
\end{defn}

\begin{rem}
\begin{enumerate}
\item
A ring $R$ is perfectoid if and only if it is $\pi$-adically complete for some element $\pi \in R$ such that $\pi^p$ divides $p$, the Frobenius map $\varphi \colon R/p \to R/p $ is surjective and the kernel of $\theta \colon W(R^\flat) \to R$ is principal, say generated by an element $\xi$ (here $\theta$ denotes the unique lift of the natural surjection $R^\flat := \lim\limits_{\leftarrow \sigma} R/p \to R/p$).
 In this case we obtain an isomorphism $W(R^\flat)/\xi \to R$ which is induced by $\theta$ and whenever we have a presentation of a perfectoid ring $R=W(S)/\xi$ we may identify $S$ with $R^\flat$ (cf. \cite{L1} Remark 8.6).
\item The notion of a perfectoid ring defined above is sometimes called \emph{integral perfectoid ring} to distinguish from the notion of a \emph{perfectoid Tate Huber ring}, which plays the central role in the theory of perfectoid spaces. However, given a perfectoid Tate Huber pair $(R,R^+)$, the ring of integral elements $R^+$ is a perfectoid ring in the sense defined above.

\item For a perfectoid ring $R= W(S)/ \xi$ we have $W_n(S)/ \xi = R/p^n$ for all $n \in \N$ and the elements $p$ and $\xi \in W(S)$ are nonzerodivisors. The latter implies that $R$ is $p$-torsion free if and only if $W_n(S)$ is $\xi$-torsion free for all $n \in \N$. Indeed, the $W(S)$-module 
\begin{gather*}
\lbrace(x,y) \in W(S)^2 \ | \ \xi x =p^ny \rbrace / \lbrace (p^nz , \xi z)  \ | \ z \in W(S) \rbrace
\end{gather*}
is isomorphic to both $\lbrace x \in \Wn(S) \ | \ \xi x =0 \rbrace $ and $\lbrace y \in R \ | \  p^n y =0 \rbrace$ via $(x,y) \mapsto x$ and $(x,y) \mapsto y$ respectively.
\item Every perfectoid ring $R=W(S)/ \xi$ is $p$-adically complete. In particular, $\Max(R)=\Max(R/p)=\Max(S/\xio) = \Max(S)$.
\end{enumerate}
\end{rem}

\begin{exmp}
\begin{enumerate}
\item Every perfect ring $S$ may be written as $W(S)/p$ and is hence a perfectoid ring. In particular, every perfect field of characteristic $p$ is a perfectoid ring.
\item Let $C$ be an algebraically closed complete non-archimedean extension of $\Qp$ with ring of integers $\OC$. Choosing a compatible system of $p$-power roots of $p$ gives an element $p^\flat$ of $\OC^\flat$ such that $\OC^\flat$ is $p^\flat$-adically complete. 
We obtain an isomorphism $W(\OC^\flat)/([p^\flat]-p) \cong \OC$ which depicts $\OC$ as a perfectoid ring.
\end{enumerate}
\end{exmp}

\subsection{Short review of classification results}
Let $R=W(S)/\xi$ be a perfectoid ring. We denote the category of $p$-divisible groups over $R$ by $\BT(R)$.

\begin{defn}
\begin{enumerate}
\item A  \emph{(minuscule) Breuil-Kisin-Fargues module} for $R$ is a triple $(M,\varphi, \psi)$, where $M$ is a finite projective $W(S)$-module and $\varphi \colon M^\sigma \to M$ and $\psi \colon M \to M^\sigma$ are $W(S)$-linear maps such that $\psi \circ \varphi= \xi$ and $\varphi \circ \psi= \xi$.
\item
We denote the category of Breuil-Kisin-Fargues modules for $R$ by $
\BK(R)$ (the morphisms are $W(S)$-linear maps which are compatible with the $\varphi$'s; the compatibility with $\psi$'s is then automatic as explained below) .
\end{enumerate}
\end{defn}

Note that since $\xi \in W(S)$ is a nonzerodivisor, the maps $\varphi$ and $\psi$ are injective and hence $\psi$ is determined by $\varphi$ (and vice versa) via the formula $\psi(x)= \varphi^{-1}(\xi x)$. We only add the datum to draw the connection to other semi linear algebra data as defined below.

\begin{theorem}\label{equivalence for BT}
There is an exact, compatible with base change, anti-equivalence 
\begin{gather*}
\BT(R) \cong \BK(R), \\ G \mapsto \M(G),
\end{gather*}
admitting an exact, compatible with base change, inverse. 
\end{theorem}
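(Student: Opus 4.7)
The plan is to obtain this theorem by combining the prismatic Dieudonn\'e equivalence of \cite{AB}, applied to the perfect prism $(W(S),(\xi))$ attached to $R$, with a direct unpacking of the semi-linear algebra datum defining $\BK(R)$. The functor $\M$ sends a $p$-divisible group $G$ to its prismatic Dieudonn\'e module, so the substantive task is to identify the target category of the \cite{AB}-equivalence with $\BK(R)$ as formulated here.

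Concretely, \cite{AB} (building on earlier results of \cite{L1} and \cite{SW1}) produces an exact, base-change compatible anti-equivalence between $\BT(R)$ and the category of pairs $(M,\varphi)$ consisting of a finite projective $W(S)$-module $M$ together with a $W(S)$-linear map $\varphi \colon M^\sigma \to M$ whose cokernel is annihilated by $\xi$, i.e.\ $\xi M \subset \varphi(M^\sigma)$. Since $\xi$ is a nonzerodivisor in $W(S)$ and $M^\sigma$ is finite projective, $\varphi$ is automatically injective. I would then define $\psi \colon M \to M^\sigma$ by $\psi(x) := \varphi^{-1}(\xi x)$; this is well defined by the minuscule condition together with the injectivity of $\varphi$, and a direct calculation gives $\varphi \circ \psi = \xi \cdot \id_M$ and $\psi \circ \varphi = \xi \cdot \id_{M^\sigma}$, producing an object of $\BK(R)$.

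Conversely, given $(M,\varphi,\psi) \in \BK(R)$, the identity $\varphi \circ \psi = \xi$ yields $\xi M \subset \varphi(M^\sigma)$, while $\psi \circ \varphi = \xi$ together with $\xi$ being a nonzerodivisor on the finite projective module $M^\sigma$ forces $\varphi$ to be injective. Hence $\psi$ is uniquely determined by $\varphi$ via the same formula $\psi(x) = \varphi^{-1}(\xi x)$. This shows that the forgetful functor $(M,\varphi,\psi) \mapsto (M,\varphi)$ is an equivalence onto the target of the prismatic Dieudonn\'e functor, and uniqueness of $\psi$ also explains why morphisms on the $\BK(R)$-side need only commute with $\varphi$, as remarked after the definition.

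The main non-trivial input is the cited work \cite{AB}, which supplies finite projectivity of the associated module, the minuscule condition $\xi M \subset \varphi(M^\sigma)$, and both anti-equivalence and the exact, base-change compatible inverse. Once that is granted, the passage to the triple formalism $(M,\varphi,\psi)$ is essentially formal. I anticipate the only delicate point outside \cite{AB} to be the bookkeeping around normalization conventions (such as $M^\sigma$ versus $\sigma^\ast M$, the side on which the Frobenius linearization is taken, and the precise form of the minuscule condition) across \cite{L1}, \cite{SW1}, and \cite{AB}; a careful matching of these conventions is what justifies calling the resulting composite functor $\M$ an anti-equivalence in the form stated.
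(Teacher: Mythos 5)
Your proposal is correct and takes essentially the same route as the paper: the paper's proof of this theorem is simply the citation of \cite{SW1} Theorem 17.5.2 resp.\ \cite{AB} Corollary 4.3.8, and the translation you carry out between the pair $(M,\varphi)$ with $\xi$-torsion cokernel and the triple $(M,\varphi,\psi)$ is precisely the content of the remark the paper places just before the theorem (injectivity of $\varphi$ from $\xi$ being a nonzerodivisor, and $\psi(x)=\varphi^{-1}(\xi x)$). There is nothing to add beyond noting that you have made explicit the convention-matching the paper leaves implicit.
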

\begin{proof}
\cite{SW1} Theorem 17.5.2. resp. \cite{AB} Corollary 4.3.8.
\end{proof}

\begin{defn}
Let $\BTtor(R)$ denote the category of commutative, finite, locally free $R$-group schemes which are killed by a power of $p$. 
\end{defn}

Note that the category $\BTtor(R)$ is denoted by $\text{pGR}(\Spec R)$ in \cite{L1}. We justify our notation by Theorem \ref{equivalence for BTtor} and and Remark \ref{relation between the two functors} below.

\vspace{0.5cm}

The following definition is taken from \cite{L1} Definition 10.5. Such objects are introduced as \emph{torsion prismatic Dieudonn\e modules} in \cite{AB} Definition 5.1.1. 

\begin{defn}
\begin{enumerate}
\item A \emph{torsion Breuil-Kisin-Fargues module for $R$} is a triple $M=(M,\varphi,\psi)$, where $M$ is a finitely presented $W(S)$-module of projective dimension $\leq 1$, killed by a power of $p$, and $M^\sigma \xrightarrow{\varphi} M , M \xrightarrow{\psi} M^\sigma$
are $W(S)$-linear maps such that $\varphi \circ \psi= \xi$ and $\psi \circ \varphi= \xi$.
\item We denote the category of torsion Breuil-Kisin-Fargues modules for $R$ by $\BKtor(R)$ (the morphisms are $W(S)$-linear maps which are compatible with the $\varphi$'s and the $\psi$'s).
\end{enumerate}
\end{defn}

\begin{rem}\label{torsion free case}
If $R$ is $p$-torsion free (equivalently $W_n(S)$ is $\xi$-torsion free for all $n \in \N$) for any object $M=(M,\varphi,\psi)$ of $\BKtor(R)$ the module $M$ is $\xi$-torsion free and hence $\varphi$ and $\psi$ are injective. So $\varphi$ determines $\psi \colon M \to M^\sigma$ via the formula $\psi(x)=\varphi^{-1}(\xi x)$ (and vice versa). Indeed, assume $p^nM=0$ then $M$ fits into a short exact sequence 
\begin{gather*}
 0 \to P_1 \to P_0 \to M \to 0,
 \end{gather*} 
 where $P_1$ and $P_0$ are finite projective $W(S)$-modules s.t. $p^nP_0 \subseteq P_1$. We obtain an injection
 \begin{gather*}
  p^nP_0/p^nP_1 \to P_1 /p^nP_1 
 \end{gather*}
 of $W_n(S)$-modules, where the latter is projective and hence $M = p^nP_0/p^nP_1$ is torsion free. But $\xi\in W_n(S)$ is a nonzerodivisor by assumption.
\end{rem}

 By abuse of notation, we use the same symbol $\M$ for the anti-equivalences in Theorem \ref{equivalence for BT} and Theorem \ref{equivalence for BTtor} below. 

\begin{theorem}\label{equivalence for BTtor}
There is an exact, compatible with base change, anti-equivalence
\begin{gather*}
\BTtor(R) \cong \BKtor(R), \\ G \mapsto \M(G),
\end{gather*}
admitting an exact, compatible with base change, inverse.
\end{theorem}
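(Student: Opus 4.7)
The plan is to bootstrap the anti-equivalence from Theorem \ref{equivalence for BT} by means of two-term resolutions on both sides. On the group scheme side, every $G \in \BTtor(R)$ fits into a short exact sequence $0 \to G \to H_0 \to H_1 \to 0$ with $H_0, H_1 \in \BT(R)$; such resolutions by Barsotti--Tate groups exist fppf-locally via Raynaud-type embeddings and, over a perfectoid base, are implicit in the proofs of \cite{SW1} and \cite{AB}. On the module side, the condition $\pd_{W(S)} M \leq 1$ in the definition of $\BKtor(R)$ directly supplies a two-term resolution $0 \to P_1 \to P_0 \to M \to 0$ by finite projective $W(S)$-modules. The contravariant exact functor $\M$ of Theorem \ref{equivalence for BT} will then transport each type of resolution into the other.

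For the forward direction, given $G$ killed by $p^n$ with a resolution as above, applying $\M$ produces an exact sequence
\begin{gather*}
0 \to \M(H_1) \to \M(H_0) \to \M(G) \to 0,
\end{gather*}
in which the first two modules are finite projective, hence $\M(G)$ is finitely presented of projective dimension $\leq 1$ and killed by $p^n$. The maps $\varphi, \psi$ and the relations $\varphi \psi = \psi \varphi = \xi$ descend automatically from $\M(H_0)$ to the cokernel. Independence of the choice of resolution is a routine homotopy argument: two resolutions are dominated by a common third, and morphisms of resolutions are unique up to homotopy, which vanishes after passing to cokernels.

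For the quasi-inverse, starting from $(M, \varphi, \psi) \in \BKtor(R)$ with resolution $0 \to P_1 \to P_0 \to M \to 0$, use projectivity of $P_0^\sigma$ to lift $\varphi$ to $\varphi_0 \colon P_0^\sigma \to P_0$ and similarly lift $\psi$. After adjusting by elements of $\Hom(P_0^\sigma, P_1)$ one arranges that $\varphi_0$ restricts to $\varphi_1 \colon P_1^\sigma \to P_1$ and that the identities $\varphi_i \psi_i = \psi_i \varphi_i = \xi$ hold on both $P_i$. Applying the quasi-inverse of Theorem \ref{equivalence for BT} to the resulting morphism $(P_1, \varphi_1, \psi_1) \to (P_0, \varphi_0, \psi_0)$ of $\BK$-modules yields a map of $\BT$-groups whose kernel is the sought-after $G \in \BTtor(R)$. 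Full faithfulness, exactness, and compatibility with base change then reduce via the five lemma and the snake lemma to the corresponding assertions for $\M$ on $\BT$-groups.

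The main obstacle is precisely the lifting step. The naive lifts $\varphi_0, \psi_0$ need not satisfy $\varphi_0 \psi_0 = \xi$ or $\psi_0 \varphi_0 = \xi$ exactly, nor need $\varphi_0$ automatically send $P_1^\sigma$ into $P_1$; the defects factor through groups of the form $\Hom(P_0^\sigma, P_1)$, and one must show that these obstructions can be trivialized by modifying the lifts, using the $\xi$-regularity of finite projective $W(S)$-modules together with the fact that the desired identities already hold after reduction to $M$. Once this technical heart is handled, the two-sided equivalence follows formally from the $\BT$-case.
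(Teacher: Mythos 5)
First, be aware that the paper does not prove this statement: its entire proof is the citation \cite{AB} Theorem 5.1.4 (see also \cite{L1} Theorem 10.12), and the construction you propose --- resolve $G$ by an isogeny of $p$-divisible groups, apply the anti-equivalence of Theorem \ref{equivalence for BT}, take cokernels --- is exactly the skeleton recorded in Remark \ref{relation between the two functors} as a description of how those cited proofs go. So your blueprint is the standard one, but since the paper offers no argument of its own, you are attempting to supply a proof the paper deliberately outsources, and the question is whether your sketch actually closes.

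It does not, for two concrete reasons. First, the resolution $0 \to G \to H_0 \to H_1 \to 0$ with $H_i \in \BT(R)$ exists only Zariski-locally on $\Spec(R)$ (\cite{BBM} Theorem 3.1.1); your own hedge ``fppf-locally'' concedes this, yet the rest of the argument treats the resolution as global. You therefore need to define $\M(G)$ locally, prove independence of the chosen resolution, and glue. The homotopy argument you invoke for independence is unproblematic on the module side, but on the group side $\BT(R)$ has no projectives or injectives in the relevant sense, so lifting a morphism $G \to G'$ to a morphism of resolutions --- which you also need for functoriality and for the five-lemma reduction of full faithfulness --- is not automatic and requires an actual argument; in \cite{AB} this whole issue is sidestepped because $\M$ is defined globally by prismatic cohomology and the cokernel description is only a local computation. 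Second, the step you yourself label the ``main obstacle'' --- producing lifts $\varphi_0, \psi_0$ on a projective resolution $0 \to P_1 \to P_0 \to M \to 0$ that satisfy $\varphi_0\psi_0 = \psi_0\varphi_0 = \xi$ exactly and carry $P_1^\sigma$ into $P_1$ --- is precisely the content of the essential surjectivity proofs in \cite{L1} \S 10 and \cite{AB} \S 5.1, and it is not formal: the obstructions in $\Hom(P_0^\sigma, P_1)$ must actually be shown to vanish after modification, using $\xi$-regularity and the structure of $W(S)$. Deferring it means the proposal establishes neither essential surjectivity nor (via the resolution-lifting issue) full faithfulness; the two load-bearing steps are asserted rather than proved.
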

\begin{proof}
\cite{AB} Theorem 5.1.4.
\end{proof}

\begin{rem}\label{relation between the two functors}
\begin{enumerate}
\item Zariski-locally on $\Spec(R)$ every $G \in \BTtor(R)$ can be written as the kernel of an isogeny of $p$-divisible groups over $R$ , so Zariski-locally, $G=\ker(H_1 \to H_2)$ (cf. \cite{BBM} Theorem 3.1.1). The functor of the theorem above can Zariski-locally on $R$ be defined as $\M(G):=\coker(\M(H_1) \to \M(H_2))$ using the anti-equivalence of Theorem \ref{equivalence for BT} (cf. proof of \cite{L1} Theorem 10.12 and proof of \cite{AB} Theorem 5.1.4).
\item
By \cite{AB} Proposition 4.3.2 and the explicit description of $\M$ given in Theorem 5.1.4, the anti-equivalence $\M$ extends crystalline Dieudonn\e Theory of \cite{BBM}. 
\end{enumerate}
\end{rem}

\section{$\BTe$-groups and Dieudonn\e spaces}

The goal of this section is to determine the essential image of the full subcategory $\BTe(R)$ of $\BTtor(R)$, defined below, for a perfectoid ring $R$ under the anti-equivalence $\BTtor(R) \cong \BKtor(R)$. Fix a perfectoid ring $R=W(S)/\xi$.

\subsection{Truncated by $1$ Barsotti-Tate groups}

\begin{defn}
Let $\BT_1(R)$ denote the full subcategory of $\BTtor(R)$ consisting of objects $G$ such that $G$ is annihilated by $p$ and for $G_\circ:=G \otimes_R R/p$ 
the sequence
\begin{gather*}
  G_\circ \xrightarrow{F} G_\circ^{(p)} \xrightarrow{V} G_\circ 
  \end{gather*}   
 is exact ($F$ resp. $V$ denote the Frobenius resp. Verschiebung of $G_\circ$). Objects of $\BTe(R)$ are called \emph{truncated by $1$ Barsotti-Tate groups over $R$} or \emph{$\BT_1$-groups over $R$}.
\end{defn}

\begin{defn} Let $G =\Spec(A) $ be a $\BTe$-group over $R$. 
\begin{enumerate}
\item
The locally constant function $ \hig \colon \Spec(R/p) \to \N_0$ which is uniquely determined by $s \mapsto \log_p( \dim_{\kappa(s)} (A \otimes_R \kappa(s)))$, for $s \in \Max(R/p)$ with residue field $\kappa(s)$, is said to be the \emph{height of $G$}.
\item
The locally constant function $ \dim \colon \Spec(R/p) \to \N_0$ which is uniquely determined by $s \mapsto  \dim_{\kappa(s)} (\Lie(G \otimes_R \kappa(s)))$, for $s \in \Max(R/p)$ with residue field $\kappa(s)$, is said to be the \emph{dimension of $G$}.
\end{enumerate} 
\end{defn}

\begin{exmp}
 Let $G$ be a $p$-divisible group over $R$ of constant height $h$ and dimension $d$, and let $G[p]$ denote its group of $p$-torsion. Then $G[p]$ is a $\BTe$-group of height $h$ and dimension $d$. As a corollary of the lifting results of §3, we will see that every object of $\BTe(R)$ is of this form.
\end{exmp}

\begin{rem}
\begin{enumerate}
\item
Let $G$ be some $\BTe$-group over $R$ with $M = \M(G)$ the associated $\BKtor$-module under the anti-equivalence of Theorem \ref{equivalence for BTtor}.
Since $\M$ commutes with base change and extends the Dieudonn\e Theory of \cite{BBM}, given some point $s \in \Max(S)$ with residue field $\kappa$ (which is perfect and of characteristic $p$), the base change $M_s=(M_s:=M\otimes_S \kappa,\varphi_s:=\varphi \otimes \id_\kappa, \psi_s:= \psi \otimes \id_\kappa)$ of $M$ to $\kappa$ is a \emph{Dieudonn\e space} over $\kappa$, i.e. $M_s$ is a finite dimensional $\kappa$-vector space such that $\psi_s \circ \varphi_s=0$, $\varphi_s \circ \psi_s=0$ and the sequence
\begin{gather*}
 M_s \xrightarrow{\psi_s} M_s^\sigma \xrightarrow{\varphi_s} M_s
 \end{gather*} 
 is exact (note that this implies that $ M_s^\sigma \xrightarrow{\varphi_s} M_s \xrightarrow{\psi_s} M_s^\sigma$ is exact as well).
 \item The \emph{height} of a Dieudonn\e space $(M,\varphi,\psi)$ over a perfect field $k$ of characteristic $p$ is defined to be the $k$-dimension of $M$. The $k$-dimension of $\coker(\varphi)$ is called \emph{dimension of $M$}.
 \item For a perfect field $k$ of characteristic $p$ we denote the category of Dieudonn\e spaces over $k$ by $\BKe(k)$. The functor $\BTe(k) \to \BKe(k)$ which is induced by $\M$ is a height and dimension preserving anti-equivalence, since the Dieudonn\e Theory of \cite{BBM} §3 provides this result and $\M$ extends this theory. 
 \end{enumerate}
\end{rem}

The following lemma gives a fibrewise characterization of $\BTe$-groups. This will lead to a description of the essential image of $\BTe(R)$ under the anti-equivalence of Theorem \ref{equivalence for BTtor}.

\begin{lemma}\label{characterization of BTe}
Let $G \in \BTtor(R)$ such that $G$ is killed by $p$. Then $G \in \BTe(R)$ if and only if $G \otimes_R \kappa(s) \in \BTe(\kappa(s))$ for all $s \in \Max(R/p)$.
\end{lemma}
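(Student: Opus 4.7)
The direction $(\Rightarrow)$ is formal. If $G \in \BTe(R)$, then over $R/p$ the sequence $G_\circ \xrightarrow{F} G_\circ^{(p)} \xrightarrow{V} G_\circ$ is exact, and in the $\BTe$-setting $\ker F$ is a finite locally free closed subgroup scheme of $G_\circ$, so $\im F = G_\circ/\ker F = \ker V$ is preserved both as a closed subscheme and as a quotient by a finite locally free subgroup scheme under base change along $R/p \twoheadrightarrow \kappa(s)$. Exactness at each fibre then follows immediately.

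For the direction $(\Leftarrow)$, my plan is to transport the problem to the semi-linear algebra side via the anti-equivalence $\M$ of Theorem \ref{equivalence for BTtor}. Write $M := \M(G)$; since $pG = 0$ forces $pM = 0$, the module $M$ is a finitely presented $S$-module on which $\xi$ acts as $\xi_0$. The complex $G_\circ \xrightarrow{F} G_\circ^{(p)} \xrightarrow{V} G_\circ$ corresponds contravariantly to the complex of $S$-modules
\begin{equation*}
M \xrightarrow{\psi} M^\sigma \xrightarrow{\varphi} M.
\end{equation*}
Compatibility of $\M$ with base change along $R \to \kappa(s)$ (each residue field $\kappa(s)$ is perfect, hence perfectoid via $\kappa(s) = W(\kappa(s))/p$), combined with classical Dieudonn\'e theory over a perfect field, translates the fibrewise hypothesis into: for every $s \in \Max(R/p) = \Max(S)$, the sequence $M_s \xrightarrow{\psi_s} M_s^\sigma \xrightarrow{\varphi_s} M_s$ is exact, i.e. $M_s$ is a Dieudonn\'e space.

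To upgrade fibrewise exactness to global exactness, set $H := \ker(\varphi)/\im(\psi)$, a finitely generated $S$-module. The crux is to verify $H \otimes_S \kappa(\mathfrak m) = 0$ at every maximal ideal $\mathfrak m$ of $S$; this follows formally from $\ker(\varphi_\mathfrak m)/\im(\psi_\mathfrak m) = 0$ once the constituent modules of the complex are termwise flat at $\mathfrak m$, and this flatness is pinned down by the locally constant height of $G$ together with the fibre dimensions of $\ker(\psi)$ and $\im(\varphi)$ that the $\BTe$-structure forces at each fibre. Nakayama then gives $H_{\mathfrak m} = 0$ for every maximal ideal; since every prime of $S$ lies below a maximal one, $H = 0$, and applying $\M^{-1}$ yields $G \in \BTe(R)$.

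\textbf{Main obstacle.} The subtle step is closing the potential $\Tor$-gap between the fibre of the global homology $H \otimes_S \kappa(\mathfrak m)$ and the fibre homology $\ker(\varphi_\mathfrak m)/\im(\psi_\mathfrak m)$: one must extract enough fibrewise flatness of the constituent modules from the fibrewise $\BTe$ dimension data. Once this dimension count is in place, the Nakayama upgrade and the reduction to all primes via the identity $\Max(R) = \Max(R/p) = \Max(S)$ (Remark 1.4(iv)) are routine.
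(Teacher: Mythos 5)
Your proposal has a genuine gap, and it also misses the much shorter route the paper takes. The paper's entire proof is one line: the sequence $G_\circ \xrightarrow{F} G_\circ^{(p)} \xrightarrow{V} G_\circ$ is a complex of finite locally free group schemes over the affine scheme $\Spec(R/p)$, so de Jong's fibrewise exactness criterion (Lemma \ref{exact sequences for finite locally free group schemes}, i.e.\ \cite{AJ} Proposition 1.1) applies directly, and one only needs $\Max(R)=\Max(R/p)$ to match the fibres. No passage to the module side is needed at all.

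Your detour through $\M$ has two concrete problems. First, the step you yourself flag as the ``main obstacle'' --- closing the $\Tor$-gap between $H\otimes_S\kappa(\mathfrak m)$ and the fibre homology via a ``dimension count'' --- is precisely the mathematical content that has to be supplied, and you do not supply it; saying it ``is pinned down by the locally constant height'' is not an argument. (The paper does carry out exactly this kind of argument, but on the module side and for a different statement: see the proof of Lemma \ref{fibrewise for BTe}, where projectivity of $\coker(\varphi)$ is extracted from fibrewise exactness by a diagram chase before Nakayama is invoked.) Second, and more seriously, your plan presupposes that the complex $G_\circ\xrightarrow{F}G_\circ^{(p)}\xrightarrow{V}G_\circ$ over $R/p$ corresponds under $\M$ to $M\xrightarrow{\psi}M^\sigma\xrightarrow{\varphi}M$ globally over $R$, both at the start (to translate the hypothesis) and at the end (to ``apply $\M^{-1}$'' and conclude $G\in\BTe(R)$). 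The paper never establishes such a global identification of $F,V$ with $\psi,\varphi$ over an arbitrary perfectoid ring; it only uses it over perfect residue fields, where it is classical Dieudonn\'e theory, and this is exactly why Proposition \ref{BTe=BKe} is deduced by combining the two fibrewise characterizations (Lemma \ref{characterization of BTe} and Proposition \ref{characterization of BKe}) rather than by a direct global comparison. As written, your argument for the lemma quietly assumes a statement that is only proved downstream of the lemma itself.
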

\begin{proof}
Noting that $\Max(R)= \Max(R/p)$ this is a corollary of the following general assertion on finite locally free group schemes.
\end{proof}

\begin{lemma}\label{exact sequences for finite locally free group schemes}
Let $X$ be an affine scheme. A sequence of finite locally free group schemes 
\begin{gather*}
G' \xrightarrow{\varphi} G \xrightarrow{\psi} G''
\end{gather*}
over $X$
is exact if and only if $\psi \circ \varphi=0$ and for every closed point $x \in X$ the sequence $
G'_x \to G_x \to G''_x$ 
is exact.
\end{lemma}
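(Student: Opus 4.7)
The forward implication is routine: exact sequences of fppf abelian sheaves are preserved under base change, so pulling back along each $\Spec \kappa(x) \hookrightarrow X$ for a closed point $x$ yields fibrewise exactness. The substance lies in the converse, and that is where I would focus my effort.

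For the converse, my plan is to set $K := \ker(\psi) \subseteq G$, a closed subgroup scheme. Since $\psi \circ \varphi = 0$, the map $\varphi$ factors as $G' \xrightarrow{\varphi'} K \hookrightarrow G$, and exactness of the sequence at $G$ is equivalent to $\varphi'$ being an fppf epimorphism. The idea is to compare the fppf image $I := \im(\varphi) = G'/\ker(\varphi)$ with $K$ as subobjects of $G$. Granted that $\ker(\varphi)$ is a finite locally free closed subgroup scheme of $G'$, the fppf quotient $I$ is representable by a finite locally free commutative group scheme, and the induced map $I \to K$ is a closed immersion. Fibrewise exactness at every closed point $x \in X$ then translates precisely into $I_x \xrightarrow{\sim} K_x$, so the problem reduces to the following claim: a closed immersion of $X$-schemes, whose source is finite locally free, that is fibrewise an isomorphism over every closed point of $X$, is itself an isomorphism.

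The final reduction is a Nakayama-type argument. Writing $X = \Spec R$, $I = \Spec B$ and $K = \Spec C$, the closed immersion corresponds to a surjection $C \twoheadrightarrow B$ whose kernel $J$ is a finitely generated ideal of $C$. The fibrewise vanishing $J \otimes_R \kappa(\mathfrak{m}) = 0$ for every maximal ideal $\mathfrak{m} \subseteq R$ gives $J_\mathfrak{m} = \mathfrak{m} R_\mathfrak{m} \cdot J_\mathfrak{m}$, so Nakayama's lemma applied in each local ring $R_\mathfrak{m}$ to the finitely generated module $J_\mathfrak{m}$ forces $J_\mathfrak{m} = 0$ for every $\mathfrak{m}$; since the annihilator of any non-zero element of $J$ would otherwise be contained in some maximal ideal, this yields $J = 0$.

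The main obstacle I anticipate is establishing that $\ker(\varphi)$ is finite locally free over $X$. One has to combine the fibrewise data (the orders of $\ker(\varphi_x)$ are pinned down by the fibrewise exactness and the locally constant ranks of $G'$ and of $\ker(\psi_x)$) with the rigidity of morphisms of commutative finite locally free group schemes in order to bootstrap both the flatness and the finite presentation of $\ker(\varphi)$. Once this is in place, the remainder of the argument is essentially formal.
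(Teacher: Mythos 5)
The paper does not prove this lemma itself; it quotes it as \cite{AJ}, Proposition 1.1, so your outline has to stand on its own. The forward direction and the closing Nakayama step are fine (for the latter, note that $I_x \xrightarrow{\sim} K_x$ a priori only gives $J \subseteq \mathfrak{m}C$; one uses that $C \twoheadrightarrow B$ splits, $B$ being projective over $R$, to conclude $J \otimes_R \kappa(\mathfrak{m}) = 0$). The genuine gap is exactly the point you flag and then defer: finite local freeness of $\ker(\varphi)$. Without it, $I = G'/\ker(\varphi)$ is not available as a finite locally free group scheme and the comparison $I \hookrightarrow K$ never gets started. This is not a loose end that ``rigidity plus order counting'' will close: kernels of homomorphisms of finite locally free group schemes are not flat in general (multiplication by $t$ on $\alpha_p$ over $k[t]$ already fails), the fibre rank of $\ker(\psi_x)$ is only upper semicontinuous rather than locally constant a priori, and over a non-reduced base constancy of fibre ranks of a finite module does not imply flatness. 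So the step you call the main obstacle is essentially the entire content of the lemma, and the proposal contains no argument for it.

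The standard way out --- and, in substance, de Jong's --- is to avoid $\im(\varphi)$ altogether and work only with $K = \ker(\psi)$, which is always a finite, finitely presented closed subscheme of $G$ (no flatness required). One proves directly that $\varphi' \colon G' \to K$ is finite and faithfully flat: $G'$ is $X$-flat and, over each closed point $x$, the map $G'_x \to K_x$ is a surjection of finite group schemes over a field, hence faithfully flat, so the crit\`ere de platitude par fibres (EGA IV, 11.3.10) makes $\varphi'$ flat at every point of $G'$ lying above a closed point of $X$; since the non-flat locus of $\varphi'$ is closed and $G' \to X$ is a closed map, that locus is empty. Likewise the image of $\varphi'$, being open and closed in $K$, is all of $K$. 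A finite faithfully flat homomorphism is an fppf epimorphism, so $\im(\varphi) = K = \ker(\psi)$, and the flatness of $\ker(\varphi)$ that you wanted as an input drops out at the end as the base change of $\varphi'$ along the unit section of $K$. Either supply an argument of this kind or simply cite \cite{AJ} Proposition 1.1 as the paper does.
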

\begin{proof}
\cite{AJ} Proposition 1.1.
\end{proof}

\subsection{Modules associated with $\BTe$-groups}

The fibrewise characterization of $\BTe(R)$ hints that its essential image consists of such $M \in \BKtor(R)$ which satisfy $pM=0$ and whose fibres are Dieudonn\e spaces for all closed points of $\Spec(S)$. The goal of this section is to show that this essential image is also given via the following global definition.

\begin{defn}\label{Defn of BK1}
Let $\BKe(R)$ be the full subcategory of $\BKtor(R)$ whose objects are given by triples $M=(M, \varphi, \psi)$, such that

\begin{enumerate}
\item[(a)] $M$ is killed by $p$ (and hence is a finite projective $S$-module by Lemma \ref{proj over S}),
\item[(b)] $\coker(\varphi)$ is finite projective as an $S/\xio$-module,
\item[(c)] the induced sequence 
\begin{gather*}
  M / \xio \xrightarrow{\overline{\psi}} M^\sigma/ \xio \xrightarrow{\overline{\varphi}} M / \xio
\end{gather*}
is exact.
\end{enumerate}
\end{defn}
 Objects of $\BKe(R)$ are also called $\BKe$-modules over $R$.
The lemma below ensures that the underlying $S$-module of a $\BKe$-module is in fact finite projective as indicated in (a):

\begin{lemma}\label{proj over S}
Let $M$ be a finite $W(S)$-module which is killed by $p$. Then $M$ is of projective dimension $1$ as a $W(S)$-module if and only if it is finite projective as an $S$-module.
\end{lemma}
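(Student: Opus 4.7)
The plan is to handle the two directions separately, using throughout that $p \in W(S)$ is a nonzerodivisor.

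For the direction "$M$ finite projective over $S$ implies $\pd_{W(S)}(M) \leq 1$", note that such an $M$ is a direct summand of some finite free module $S^n$. It therefore suffices to observe that $\pd_{W(S)}(S^n) \leq 1$, which is immediate from the length-one $W(S)$-free resolution
\[ 0 \to W(S) \xrightarrow{p} W(S) \to S \to 0, \]
exact precisely because $p$ is a nonzerodivisor on $W(S)$; direct summands inherit the bound on projective dimension.

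For the converse I would invoke the classical change-of-rings formula: if $R$ is a commutative ring, $x \in R$ a nonzerodivisor, and $N$ a nonzero $R/x$-module of finite $R$-projective dimension, then $\pd_R(N) = \pd_{R/x}(N) + 1$ (see e.g.\ Matsumura, \emph{Commutative Ring Theory}, Theorem 19.2; the argument uses only that $x$ is a nonzerodivisor, so no Noetherian hypothesis is needed). Applied with $R = W(S)$, $x = p$, and $N = M$, the assumption $\pd_{W(S)}(M) \leq 1$ forces $\pd_S(M) \leq 0$, so $M$ is a projective $S$-module. Since $M$ is finitely generated over $W(S)$ and killed by $p$, it is finitely generated over $S$, and therefore finite projective (a finitely generated projective module over any commutative ring is automatically a direct summand of a finite free module).

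The main point to be careful about is invoking the change-of-rings formula outside the Noetherian setting; this is legitimate, since the standard proof depends only on $p$ being a nonzerodivisor on $W(S)$. As a concrete handle making this transparent, one may start from a resolution $0 \to P_1 \to W(S)^n \to M \to 0$ in which $P_1$ is automatically $W(S)$-projective (by $\pd_{W(S)}(M) \leq 1$), and tensor over $W(S)$ with $S$ to obtain the four-term exact sequence
\[ 0 \to M \to P_1/pP_1 \to S^n \to M \to 0 \]
whose two middle terms are $S$-projective; this is the essential data one analyzes to conclude $S$-projectivity of $M$.
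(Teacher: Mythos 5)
The easy direction of your argument (finite projective over $S$ implies $W(S)$-projective dimension $\leq 1$, hence $=1$ for $M\neq 0$ since a nonzero module killed by $p$ cannot be projective over the $p$-torsion-free ring $W(S)$) is fine. The converse direction, however, has a genuine gap: the change-of-rings formula you invoke goes the wrong way. The classical first change-of-rings theorem assumes that $\pd_{R/x}(N)$ is \emph{finite} and concludes $\pd_R(N)=\pd_{R/x}(N)+1$; it does not let you descend from finiteness of $\pd_R(N)$ to finiteness of $\pd_{R/x}(N)$, and that converse is false in general. Concretely, take $R=\mathbb{Z}[i]$, $x=2$ and $N=R/(1+i)\cong\mathbb{F}_2$: here $x$ is a nonzerodivisor, $xN=0$ and $\pd_R(N)=1$, yet $N$ is not projective over $R/2\cong\mathbb{F}_2[t]/(t^2)$ (its projective dimension there is infinite). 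So the obstruction is not Noetherianity, which you flag as the only point of concern; the statement you rely on is simply not a theorem.

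Your fallback, the four-term exact sequence $0\to M\to P_1/pP_1\to S^n\to M\to 0$, is correctly derived from $\Tor_1^{W(S)}(M,S)\cong M[p]=M$, but it does not close the gap: splicing it with itself produces a $2$-periodic projective resolution of $M$ over $S$, which by Schanuel's lemma only shows $\pd_S(M)\in\{0,\infty\}$. Ruling out the value $\infty$ is exactly where the perfectness of $S$ must enter, and it appears nowhere in your argument (the $\mathbb{Z}[i]$ example realizes the $\infty$ case in a non-perfect situation). The paper itself proves nothing here: it quotes Bhatt--Scholze, Lemma 7.8, whose proof uses structural results about finitely presented modules over perfect rings. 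A self-contained proof along your lines would have to import or reprove that input at the final step.
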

\begin{proof}
 \cite{BS} Lemma 7.8. 
\end{proof}

\begin{rem}
\begin{enumerate}
\item If $R=k$ is a perfect field of characteristic $p$, we can write $k=W(k)/p$. Hence $S=k$ and $\xio=0$ and in this case and we recover the definition of Dieudonn\e spaces over $k$. 
\item More generally, if $R=S$ is a perfect ring, a $\BKe$-module is the same as a \emph{truncated Dieudonn\e module of level $1$ over $R$} as defined in \cite{L2} Definition 6.1.
\item If $R$ is $p$-torsion free, an object of $\BKtor$ which is killed by $p$ automatically satisfies property (c) of the definition above. 
\end{enumerate}
\end{rem}

\begin{exmp}
\begin{enumerate}
\item Let $\OC = W(\OC^\flat)/([p^\flat]-p)$ be the ring of integers of some algebraically closed complete non archimedean extension of $\Qp$ with tilt $\OC^\flat$. In this case we have $\xio=p^\flat$ and a $\BKtor$-module over $\OC$ which is killed by $p$ is uniquely determined by a pair $(M,\varphi)$, where $M$ is a finite free $\OC^\flat$-module and $\varphi \colon M^\sigma \to M$ is a linear map whose cokernel is killed by $p^\flat$. 
\item For every $a \in \OCb$ such that $|\pb| \leq |a| \leq 1$ the cokernel of the linear map $(\OCb)^\sigma \to \OCb, x \mapsto a \sigma^{-1}(x)$ is killed by $\pb$. 
However, its cokernel is finite projective as a $\OCb / \pb \OCb$-module if and only if it is isomorphic to $(\OCb/ \pb \OCb)^n$ as an $\OCb/ \pb \OCb$-module for some $n \in \N$ because $\OCb/ \pb \OCb$ is local. But this is satisfied if and only if  $|a| \in \lbrace |\pb|, 1 \rbrace$ since multiplication with $a$ is not equal to $0$ on $(\OCb/ \pb \OCb)^n$ if $|a|>| \pb|$. Hence, up to isomorphism, we only obtain two different $\BKe$-modules in this case.
 One associated with the $\BTe$-group $\underline{\Z/p\Z}_{\OC}$ and one associated with $\mu_{p,\OC}$. This observation matches our expectation that every $\BKe$-module over $\OC$ can be lifted to some $\BK$-module over $\OC$, because up to isomorphism there are only two $\BK$-modules over $\OC$. Those are by the anti-equivalence of Theorem \ref{equivalence for BT} given by the $\BT$-groups $\underline{\Qp / \Zp}$ and $\mu_{p^\infty}$. 
\end{enumerate}
\end{exmp}

We also have a notion of height and dimension for $\BKe$-modules:

\begin{defn}
Let $M=(M,\varphi,\psi) \in \BKe(R)$.
\begin{enumerate}
\item
The locally constant function $ \hig \colon \Spec(S/\xio) \to \Z$ uniquely determined by $s \mapsto \dim_{\kappa(s)} (M \otimes_S \kappa(s))$, for $s \in \Max(S/\xio)$ with residue field $\kappa(s)$, is said to be the \emph{height of $M$}.
\item
The locally constant function $ \dim \colon \Spec(S/\xio) \to \Z$ uniquely determined by $s \mapsto \dim_{\kappa(s)}( \coker(\varphi) \otimes_{S/\xio} \kappa(s))$, for $s \in \Max(S/\xio)$ with residue field $\kappa(s)$, is said to be the \emph{dimension of $M$}.
\end{enumerate}
\end{defn}

The following lemma is the analogue of Lemma \ref{exact sequences for finite locally free group schemes} providing a fibrewise characterization of $\BTe(R)$:

\begin{lemma}\label{fibrewise for BTe}
Let $A$ be a ring and 
\begin{gather*}
M^\bullet=(M' \xrightarrow{\varphi} M \xrightarrow{ \psi} M' \xrightarrow{ \varphi} M)
\end{gather*}
be a sequence of finite projective $A$-modules such that $ \varphi \circ \psi =0$ and $ \psi \circ \varphi=0$. The following assertions are equivalent:
\begin{enumerate}
\item $M^\bullet \otimes_A \kappa(x)$ is exact for all $x \in \Max(A)$,
\item $M^\bullet$ is exact and $\coker(\varphi)$ is a finite projective $A$-module.
\item $M \xrightarrow{ \psi} M' \xrightarrow{ \varphi} M$ is exact and $\coker(\varphi)$ is a finite projective $A$-module.
\end{enumerate}   In this case $\im(\psi)$ and $ \ker(\varphi)$ are direct summands of $M'$, $\im(\varphi)$ and $\ker(\psi)$ are direct summands of $M$ and $\coker(\psi)$ is a finite projective $A$-modules as well.
\end{lemma}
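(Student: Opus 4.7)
The plan is to prove the cyclic implications $(2) \Rightarrow (3) \Rightarrow (1) \Rightarrow (2)$, with the direct summand statements emerging as byproducts. The first implication $(2) \Rightarrow (3)$ is a pure weakening: restricting $M^\bullet$ to its last three terms preserves exactness, and the hypothesis on $\coker(\varphi)$ is kept verbatim.

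For $(3) \Rightarrow (1)$, I would first observe that, under $(3)$, all of $\im(\varphi), \ker(\varphi), \im(\psi), \ker(\psi), \coker(\psi)$ are finite projective direct summands. Indeed, finite projectivity of $\coker(\varphi)$ splits the sequence $0 \to \im(\varphi) \to M \to \coker(\varphi) \to 0$, so $\im(\varphi)$ is a finite projective direct summand of $M$. The exactness hypothesis identifies $\ker(\varphi) = \im(\psi)$ and splits $0 \to \ker(\varphi) \to M' \to \im(\varphi) \to 0$, realising both as a finite projective direct summand of $M'$. Then $\coker(\psi) \cong M'/\ker(\varphi) \cong \im(\varphi)$ is finite projective, so the analogous splitting for $\psi$ makes $\ker(\psi)$ a direct summand of $M$. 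All short exact sequences in sight being split, they are preserved by $-\otimes_A \kappa(x)$, which yields exactness of the fibre at $M'$. For exactness at $M$ in the fibre, $\psi\varphi = 0$ gives $\im(\varphi) \subseteq \ker(\psi)$ as direct summands of $M$, and the rank balance forced by exactness at $M'$ (combined with the rank identity $\rk M = \rk M'$ inherent in the fibrewise-exact situation) yields equality.

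For $(1) \Rightarrow (2)$, the essential work takes place at each maximal ideal $\mathfrak{m}$, where $M_\mathfrak{m}$ and $M'_\mathfrak{m}$ are finite free over the local ring $A_\mathfrak{m}$. The plan is to choose a basis in which $\bar\varphi_\mathfrak{m}$ has matrix $\bigl(\begin{smallmatrix} I_a & 0 \\ 0 & 0 \end{smallmatrix}\bigr)$ and lift via Nakayama so that $\varphi_\mathfrak{m} = \bigl(\begin{smallmatrix} B & 0 \\ 0 & E \end{smallmatrix}\bigr)$ with $B$ invertible and $E \equiv 0 \pmod{\mathfrak{m}}$. The two equations $\varphi \psi = 0$ and $\psi \varphi = 0$, combined with the invertibility of $B$, force $\psi_\mathfrak{m}$ into the form $\bigl(\begin{smallmatrix} 0 & 0 \\ 0 & I' \end{smallmatrix}\bigr)$; fibre exactness then forces $I' \bmod \mathfrak{m}$ to be an isomorphism, hence $I'$ is invertible over $A_\mathfrak{m}$, and then $I' E = 0$ gives $E = 0$. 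With $\varphi_\mathfrak{m}, \psi_\mathfrak{m}$ in this simultaneous block-diagonal form with one invertible block and one zero block, exactness of $M^\bullet_\mathfrak{m}$ and freeness of $\coker(\varphi)_\mathfrak{m}$ are immediate. Globalising, $M^\bullet$ is exact because exactness can be tested at maximal localisations, and $\coker(\varphi)$, being finitely presented and free at every maximal localisation, is finite projective.

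I expect the main obstacle to be the matrix-theoretic step in $(1) \Rightarrow (2)$: organising the local bases so that the anti-commutation relations $\varphi\psi = \psi\varphi = 0$ together with residue field exactness force simultaneous block diagonalisation and the vanishing of the off-summand block $E$. Once this structure is in place, the direct summand assertions of the final sentence fall out either from the splittings constructed in $(3) \Rightarrow (1)$ or from the explicit local block form above, and the remaining globalisation is a routine consequence of finite presentation.
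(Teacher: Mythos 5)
Your overall architecture (the cycle $(2)\Rightarrow(3)\Rightarrow(1)\Rightarrow(2)$, with the splittings carrying the closing assertions about direct summands) matches the paper's proof. Your $(1)\Rightarrow(2)$ step is correct but takes a different route: the paper shows that $\im(\varphi)\otimes_A\kappa(x)\to M\otimes_A\kappa(x)$ is injective for every closed point $x$ by a short diagram chase using $\varphi\circ\psi=0$ and fibre exactness, and then concludes projectivity of $\coker(\varphi)$ from the local criterion for flatness, finishing the exactness of $M^\bullet$ by Nakayama; you instead put $\varphi$ and $\psi$ into simultaneous block form over $A_{\mathfrak{m}}$. Both work, and your version has the merit of making the split structure completely explicit, at the cost of the matrix bookkeeping.

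The genuine problem is in $(3)\Rightarrow(1)$, exactly at the point you yourself flag: exactness of the fibre at the middle copy of $M$. You derive it from ``the rank identity $\rk M=\rk M'$ inherent in the fibrewise-exact situation'', but fibrewise exactness is what you are trying to prove at that moment, so the appeal is circular, and $\rk M=\rk M'$ is not among the hypotheses. Without it the implication is false: take $A=k$ a field, $M'=k$, $M=k^2$, $\varphi(x)=(x,0)$ and $\psi=0$. Then $\varphi\circ\psi=\psi\circ\varphi=0$, the sequence $M\xrightarrow{\psi}M'\xrightarrow{\varphi}M$ is exact at $M'$, and $\coker(\varphi)\cong k$ is projective, so $(3)$ holds; yet $\im(\varphi)=k\times 0\neq k^2=\ker(\psi)$, so $(1)$ and $(2)$ fail. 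What your argument actually establishes is that $(3)$ together with the condition that $M$ and $M'$ have equal rank at each maximal ideal implies $(1)$ — under that condition the two summands $\im(\varphi)\subseteq\ker(\psi)$ of $M$ have equal rank and hence coincide. To be fair, the paper's own treatment of $(3)\Rightarrow(1)$ is a one-sentence assertion that glosses over the same point, and in the only place the lemma is used one has $M'=M^\sigma$, so the ranks do agree; but as a proof of the lemma as literally stated your step does not close, and the clean repair is to add the rank hypothesis (or to state $(3)$ with exactness at both interior spots, which collapses it into $(2)$).
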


\begin{proof}
Obviously, 2. implies 3.
If $M \xrightarrow{ \psi} M' \xrightarrow{ \varphi} M$  is exact and $\coker(\varphi)$ is finite projective, $\im(\varphi), \ker(\varphi),\coker(\psi),$ $\im(\psi)$ and $\ker(\psi)$ satisfy the additional assertion of the lemma. Hence 1. follows. So lets assume 1. and consider the sequence
\begin{gather*}
0 \to \im (\varphi) \to M  \to \coker(\varphi) \to 0.
\end{gather*} To show that $\coker ( \varphi)$ is a projective $A$-module, we establish that $ \im (\varphi) \otimes_{A} \kappa(x) \to M \otimes_{A} \kappa(x)$ is injective for each $x \in \Max(A)$. By assumption, we have an exact sequence of $\kappa(x)$-vector spaces 
\begin{gather*}
M \otimes_A \kappa(x) \xrightarrow{\psi_x} M' \otimes_A \kappa(x) \xrightarrow{\varphi_x} M \otimes_A \kappa(x).
\end{gather*}
Here $\varphi_x$ factors as
\begin{gather*}
M'  \otimes_{A} \kappa(x) \xrightarrow{\pi} \im (\varphi) \otimes_{A} \kappa(x) \to  M  \otimes_{A} \kappa(x).
\end{gather*}
Now let $s \in \im(\varphi) \otimes_{A} \kappa(x)$ which is mapped to $0$. By surjectivity of $\pi$ we can choose $y \in M' \otimes_A \kappa(x)$ with $\pi(y)=s$. Then $\varphi_x(y)=0$, hence we find $z \in M \otimes_A \kappa(x)$ with $\psi_x(z)=y$. Therefore $s=\pi(\psi_x(z))=0$ as desired.
 Note that we can do the same calculations for $\psi$ instead of $\varphi$ to obtain projectivity of $\coker(\psi)$. Consequently, $\im(\psi), \ker(\psi), \im(\varphi)$ and $\ker(\varphi)$ satisfy the additional assertion of the lemma.
The exactness of $M^\bullet$ then follows by applying Nakayama's lemma.
\end{proof}

\begin{rem}
Let $R=W(S)/ \xi \to R'=W(S')/\xi'$ be a morphism of perfectoid rings. Then the image of $\xi$ under the induced morphism $W(S) \to W(S')$ is a distinguished element of $W(S')$ and we can identify $R'=W(S')/ \xi$. By the previous lemma we obtain a base change functor $\BTe(R) \to \BTe(R')$.
\end{rem}

\begin{prop}\label{characterization of BKe}
Let $M=(M,\varphi,\psi) \in \BKtor(R)$ such that $pM=0$. Then $M \in \BKe(R)$ if and only if $M \otimes_S \kappa(s) \in \BKe(\kappa(s))$ for all $s \in \Max(S/\xio)$. Moreover in this case the sequence 
\begin{gather*}
M^\sigma/ \xio  \xrightarrow{\varphio} M/ \xio \xrightarrow{\psio} M^\sigma / \xio
\end{gather*}
is exact as well.
\end{prop}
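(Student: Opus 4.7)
The plan is to reduce the statement to Lemma~\ref{fibrewise for BTe} applied to the sequence obtained by reducing modulo $\xio$. First, since $pM = 0$ and $M$ has projective dimension at most $1$ over $W(S)$, Lemma~\ref{proj over S} shows that $M$ is finite projective over $S$; the same holds for $M^\sigma$, since Frobenius pullback preserves finite projectivity over the perfect ring $S$. Hence $M/\xio$ and $M^\sigma/\xio$ are finite projective over $A := S/\xio$.

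Next, I consider the four-term sequence of $A$-modules
\begin{gather*}
M^\sigma/\xio \xrightarrow{\overline{\varphi}} M/\xio \xrightarrow{\overline{\psi}} M^\sigma/\xio \xrightarrow{\overline{\varphi}} M/\xio.
\end{gather*}
Because $pM=0$, the element $\xi \in W(S)$ acts on $M$ and on $M^\sigma$ through its image $\xio \in W(S)/p = S$, so the relations $\varphi \circ \psi = \xi$ and $\psi \circ \varphi = \xi$ force the composites $\overline{\varphi}\circ\overline{\psi}$ and $\overline{\psi}\circ\overline{\varphi}$ to vanish. With the lemma's "$M$" played by our $M/\xio$ and its "$M'$" played by our $M^\sigma/\xio$ (so that the lemma's $\varphi, \psi$ correspond to $\overline{\varphi}, \overline{\psi}$), conditions (b) and (c) of Definition~\ref{Defn of BK1} together become exactly condition (3) of Lemma~\ref{fibrewise for BTe}.

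On the fibre side, for $s \in \Max(A) = \Max(S/\xio)$ the residue field $\kappa(s)$ is perfect, being a quotient of the perfect ring $S$ by a maximal ideal, so Frobenius twist commutes with base change and $(M^\sigma/\xio)\otimes_A \kappa(s) = (M\otimes_S \kappa(s))^\sigma$. Condition (1) of the lemma thus translates into the exactness of the four-term sequence attached to $M\otimes_S \kappa(s)$, which matches the Dieudonné space condition from the previous remark: over a finite-dimensional $\kappa(s)$-vector space, exactness at one of the two middle spots implies exactness at the other by a dimension count, using that $\sigma$ preserves $\kappa(s)$-dimensions. The equivalence $(1)\Leftrightarrow(3)$ of Lemma~\ref{fibrewise for BTe} therefore yields the main statement of the proposition, and the "moreover" part falls out of $(3)\Rightarrow(2)$, which provides the exactness of $M^\sigma/\xio \to M/\xio \to M^\sigma/\xio$ as well.

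I expect the only delicate point to be the bookkeeping of the role swap between $\varphi$ and $\psi$ when matching our sequence against the lemma's notation, and confirming that the paper's Dieudonné space condition (phrased as exactness of $M_s \xrightarrow{\psi_s} M_s^\sigma \xrightarrow{\varphi_s} M_s$) is genuinely the same as the full four-term fibrewise exactness that comes out of Lemma~\ref{fibrewise for BTe}; everything else is formal.
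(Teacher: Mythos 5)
Your proposal is correct and takes exactly the route the paper does: the paper's entire proof is ``This follows from Lemma~\ref{fibrewise for BTe},'' and you have simply supplied the details of that reduction (identifying the lemma's condition (3) with conditions (b) and (c) of Definition~\ref{Defn of BK1}, condition (1) with the fibrewise Dieudonn\'e space condition, and extracting the ``moreover'' clause from (3)$\Rightarrow$(2)), all of which check out.
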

\begin{proof}
This follows from Lemma \ref{fibrewise for BTe}.
\end{proof}

\begin{prop}\label{BTe=BKe}
The anti-equivalence $\M$ of Theorem \ref{equivalence for BTtor} induces a height and dimension preserving anti-equivalence 
\begin{gather*}
\BTe(R) \cong \BKe(R).
\end{gather*}
\end{prop}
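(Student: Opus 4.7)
The plan is to deduce Proposition~\ref{BTe=BKe} from the fibrewise characterizations in Lemma~\ref{characterization of BTe} and Proposition~\ref{characterization of BKe}, together with the identification $\Max(R/p)=\Max(S/\xio)$ from Remark~1.2 and the fact that over a perfect field $k$ of characteristic $p$ the functor $\M$ restricts to the classical height- and dimension-preserving anti-equivalence between $\BTe(k)$ and Dieudonn\e spaces $\BKe(k)$ (as already noted in the paper's discussion of Dieudonn\e spaces).

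First I would take $G \in \BTe(R)$ and set $M := \M(G)$. Because $\M$ is an additive anti-equivalence, the vanishing $[p]_G = p \cdot \id_G = 0$ translates immediately into $pM = 0$. For each $s \in \Max(R/p) = \Max(S/\xio)$, Lemma~\ref{characterization of BTe} gives $G \otimes_R \kappa(s) \in \BTe(\kappa(s))$. Since $\M$ commutes with base change along $R \to \kappa(s)$ (viewing the residue field as a perfect ring) and extends crystalline Dieudonn\e theory, the fibre $M \otimes_S \kappa(s) \cong \M(G \otimes_R \kappa(s))$ is a Dieudonn\e space, i.e.\ an object of $\BKe(\kappa(s))$. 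Proposition~\ref{characterization of BKe} then yields $M \in \BKe(R)$.

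The converse proceeds symmetrically: given $M \in \BKe(R)$, set $G := \M^{-1}(M)$; the same additivity argument gives $[p]_G = 0$, Proposition~\ref{characterization of BKe} together with the classical Dieudonn\e anti-equivalence over $\kappa(s)$ shows $G \otimes_R \kappa(s) \in \BTe(\kappa(s))$ for every closed point, and Lemma~\ref{characterization of BTe} upgrades this to $G \in \BTe(R)$. Since $\M$ and its quasi-inverse from Theorem~\ref{equivalence for BTtor} are already mutually inverse on the larger categories, they restrict to mutually inverse functors on the full subcategories $\BTe(R)$ and $\BKe(R)$.

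The preservation of height and dimension is a purely fibrewise assertion: both invariants are defined as locally constant functions on the common closed-point set $\Max(R/p) = \Max(S/\xio)$ in terms of $\kappa(s)$-dimensions of $A \otimes_R \kappa(s)$ (respectively $\Lie$), and these match the corresponding dimensions of $M \otimes_S \kappa(s)$ (respectively $\coker(\varphi) \otimes \kappa(s)$) by the classical theory. I expect no serious obstacle; the only delicate points are the additivity argument translating the $p$-torsion condition, which is immediate, and confirming that base change along $R \to \kappa(s)$ on the group-scheme side corresponds to base change along $S \to \kappa(s)$ on the module side, which follows from $R/p = S/\xio$ combined with the base-change compatibility of $\M$.
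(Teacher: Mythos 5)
Your proposal is correct and follows essentially the same route as the paper: translate the $p$-torsion condition through the (additive/exact) anti-equivalence, then combine the fibrewise characterizations of Lemma \ref{characterization of BTe} and Proposition \ref{characterization of BKe} with base-change compatibility and the classical height- and dimension-preserving anti-equivalence over perfect residue fields. The only cosmetic difference is that you invoke additivity of $\M$ to transfer $p\cdot\id = 0$, while the paper cites exactness of $\M$ and its inverse; both suffice.
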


\begin{proof}
Let $G$ be a $\BTtor$-group over $R$ and $M=\M(G)$ be the associated $\BKtor$-module over $R$.
Since $\M$ is exact and has an exact inverse, $G$ is annihilated by $p$ if and only if $M$ is annihilated by $p$. Since $\M$ and an inverse are compatible with base change and induce height and dimension preserving anti-equivalences $\BTe(k) \cong \BKe(k)$ for every perfect field $k$, the proposition follows from Lemma \ref{characterization of BTe} and Lemma \ref{characterization of BKe}.
\end{proof}

\section{Generalization to $\BTn$ and lifting properties}

We still fix a perfectoid ring $R=W(S)/ \xi$. Similar to the previous section, we determine the essential image of $\BTn$-groups under the anti-equivalence $\M$ of Theorem \ref{equivalence for BTtor}. 
At the end we apply a lifting argument to the obtained semilinear algebra data to show that over $R$ every $\BTn$-group arises as the $p^n$-torsion of some $\BT$-group.

\subsection{Modules associated with $\BTn$-groups}

\begin{defn}
For $n \geq 2$ we denote by $\BTn(R)$ the full subcategory of $\BTtor(R)$ whose objects are those groups $G$ which are annihilated by $p^n$ and such that the sequence
\begin{gather*}
G \xrightarrow{\cdot p^{n-1}} G \xrightarrow{\cdot p} G 
\end{gather*}
is exact.
\end{defn}

As in the case for $n=1$, we have a fibrewise description of the category $\BKn(R)$:

\begin{lemma}\label{characterization of BTn}
Let $G \in \BTtor(R)$ and $n \in \N$ such that $G$ is annihilated by $p^n$. Then $G$ is an object of $\BTn(R)$ if and only if $G \otimes_R \kappa(s)$ is an object of $\BT_n(\kappa(s))$ for all $s \in \Max(R/p)$.
\end{lemma}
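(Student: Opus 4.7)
The plan is to reduce this statement directly to Lemma \ref{exact sequences for finite locally free group schemes}, exactly as in the $n=1$ case treated in Lemma \ref{characterization of BTe}. Apply that lemma with the sequence
\begin{gather*}
G \xrightarrow{\cdot p^{n-1}} G \xrightarrow{\cdot p} G,
\end{gather*}
so $G' = G = G''$, $\varphi = \cdot p^{n-1}$ and $\psi = \cdot p$.

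First I would check that the composition hypothesis of the lemma is automatic: since $G$ is annihilated by $p^n$ by assumption, $\psi \circ \varphi = \cdot p^n = 0$ on $G$. Next I would use the earlier observation that $\Max(R) = \Max(R/p)$ (recorded in the remarks on perfectoid rings), so the closed points of $\Spec(R)$ are precisely those of $\Spec(R/p)$, and for each such point $s$ the fibre sequence
\begin{gather*}
G \otimes_R \kappa(s) \xrightarrow{\cdot p^{n-1}} G \otimes_R \kappa(s) \xrightarrow{\cdot p} G \otimes_R \kappa(s)
\end{gather*}
is exactly the sequence appearing in the definition of the $\BT_n$-property over the perfect field $\kappa(s)$. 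Lemma \ref{exact sequences for finite locally free group schemes} then tells us that exactness of the global sequence is equivalent to exactness of all these fibre sequences, which is exactly the asserted equivalence.

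Since the statement is essentially a formal corollary of the cited lemma together with the identification of closed points, I do not expect any real obstacle; the proof is a one-line invocation of the same mechanism used for $\BTe$. The only thing to be slightly careful about is noting that the hypothesis $p^n G = 0$ is what makes $\psi \circ \varphi = 0$ hold, so that the hypothesis of Lemma \ref{exact sequences for finite locally free group schemes} is indeed satisfied before one may apply the equivalence.
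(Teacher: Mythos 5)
Your proposal is correct and follows exactly the paper's argument: the paper's proof is the one-line ``Note that $\Max(R) = \Max(R/p)$ and use Lemma \ref{exact sequences for finite locally free group schemes}'', applied to the sequence $G \xrightarrow{\cdot p^{n-1}} G \xrightarrow{\cdot p} G$. Your additional remark that $p^nG=0$ guarantees the vanishing of the composite is a correct and worthwhile elaboration of the same mechanism.
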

\begin{proof}
Note that $\Max(R) = \Max(R/p)$ and use Lemma \ref{exact sequences for finite locally free group schemes}.
\end{proof}

\begin{rem}\label{shorten}
Let $n \geq m $ be two natural numbers and $G \in \BTn(R)$. The $p^m$-torsion subgroup $G[p^m]$ of $G$ is an object of $\BT_m(R)$.
\end{rem}

Now we define the full subcategory of $\BKtor(R)$ which will be shown to correspond to $\BTn(R)$ under the anti-equivalence of Theorem \ref{equivalence for BTtor}.

\begin{defn}
For $n \geq 2$ we denote by $\BKn(R)$ the full subcategory of $\BKtor(R)$ consisting of objects $M=(M,\varphi,\psi)$ such that $M$ is killed by $p^n$ and finite projective as a $\Wn(S)$-module.
\end{defn}

Objects of $\BKn(R)$ are also called $\BKn$-modules over $R$ and if $R=S$ is perfect, those are the same as \emph{truncated Dieudonn\e modules of rank $n$ over $R$} as defined in \cite{L2} Definition 6.1.
\vspace{0.5cm}

If $R=S$ is perfect, so in particular if $R$ is a perfect field of characteristic $p$, we have the following classification result:

\begin{prop}\label{BTn=BKn R perfekt}
Assume that $R$ is perfect. Then the anti-equivalence $\M \colon \BTtor(R) \to \BKtor(R)$ of Theorem \ref{equivalence for BTtor} induces an anti-equivalence
\begin{gather*}
\BTn(R) \cong \BKn(R)
\end{gather*}
for all $n \geq 1$.
\end{prop}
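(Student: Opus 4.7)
The plan is to verify that the anti-equivalence $\M$ of Theorem \ref{equivalence for BTtor} restricts to the asserted equivalence. Since $R$ is perfect we have $S = R$, $\xi = p$, and $W_n(S) = W_n(R) = W(R)/p^n$; the case $n = 1$ is Proposition \ref{BTe=BKe}, so the content lies in $n \geq 2$.

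First I would establish that $\M$ sends $\BTn(R)$ into $\BKn(R)$. For $G \in \BTn(R)$, exactness of $\M$ yields $p^n \M(G) = 0$ immediately, so only the finite projectivity of $M := \M(G)$ over $W_n(R)$ requires work. My approach is to appeal to Lau's classical Dieudonn\'e theory over perfect rings \cite{L2}, which provides an anti-equivalence between $\BTn(R)$ and his category of truncated Dieudonn\'e modules of level $n$; by definition this category is $\BKn(R)$. The task is then to identify Lau's functor with $\M$. Since both are compatible with base change, and both reduce to the crystalline Dieudonn\'e functor of \cite{BBM} over perfect fields (cf. Remark \ref{relation between the two functors}(2)), they must agree after fibrewise base change at every closed point, and then globally up to canonical isomorphism. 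Consequently $M$ is finite projective over $W_n(R)$ and lies in $\BKn(R)$.

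For essential surjectivity I would take $M \in \BKn(R) \subseteq \BKtor(R)$ and let $G \in \BTtor(R)$ be the object with $\M(G) \cong M$ supplied by Theorem \ref{equivalence for BTtor}; clearly $p^n G = 0$. To verify $G \in \BTn(R)$ I would invoke the fibrewise criterion of Lemma \ref{characterization of BTn}. At every $s \in \Max(R)$ the residue field $\kappa(s)$ is perfect, and $M \otimes_{W_n(R)} W_n(\kappa(s))$ is a finite free $W_n(\kappa(s))$-module equipped with $\varphi, \psi$ satisfying $\varphi \psi = \psi \varphi = p$. Classical Dieudonn\'e theory over the perfect field $\kappa(s)$ assigns to such a datum a $\BTn$-group, which by base change compatibility of $\M$ must be $G \otimes_R \kappa(s)$, so every fibre of $G$ lies in $\BTn(\kappa(s))$ and hence $G \in \BTn(R)$.

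The main hurdle is the finite projectivity of $\M(G)$ over $W_n(R)$ for $n \geq 2$: it is not formal from $p^n \M(G) = 0$ together with $\M(G)$ having projective dimension at most one over $W(R)$, since $\mathrm{Tor}_1^{W(R)}(\M(G), W_n(R))$ need not vanish in general. The cleanest resolution is the identification with the classical Dieudonn\'e functor outlined above; a self-contained alternative would require a fibrewise projectivity criterion for $W_n(R)$-modules together with an induction on $n$ using the exact sequence $G[p^n] \hookrightarrow G \xrightarrow{p^n} G[p]$ coming from the $\BT_{n+1}$-structure, extending the arguments used in the $n = 1$ case.
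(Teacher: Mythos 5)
Your proposal is correct and takes essentially the same route as the paper, whose entire proof is to cite \cite{L2} Theorem 6.4 together with the fact (Remark \ref{relation between the two functors}) that $\M$ extends the crystalline Dieudonn\'e theory of \cite{BBM}. The only caveat is your intermediate step of matching $\M$ with Lau's functor fibre-by-fibre over closed points and then ``globally up to canonical isomorphism'' --- that inference is not valid on its own, but it is also unnecessary, since Remark \ref{relation between the two functors}(2) (via \cite{AB} Proposition 4.3.2) already identifies $\M$ with the crystalline Dieudonn\'e functor over the perfect ring $R$ itself, after which the separate essential-surjectivity argument is likewise redundant.
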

\begin{proof}
Since $\M$ extends the Dieudonn\e theory of \cite{BBM}, this is the content of \cite{L2} Theorem 6.4.
\end{proof}

\begin{rem} 
Let $R=W(S)/ \xi \to R'=W(S') / \xi$ be a morphism of perfectoid rings. We obtain a base change functor $\BKn(R) \to \BKn(R')$. 
\end{rem}

As in the $n=1$ case, we also have the following fibrewise description:

\begin{lemma}\label{characterization of BKn}
Let $M=(M,\varphi,\psi) \in \BKtor(R)$ such that $p^nM=0$. Then $M \in \BKn(R)$ if and only if $M \otimes_{W_n(S)} W_n(\kappa(s)) \in \BKn(\kappa(s))$ for all $s \in \Max(R/p)$.
\end{lemma}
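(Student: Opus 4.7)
The forward direction is immediate: finite projectivity over $W_n(S)$ is stable under the base change $-\otimes_{W_n(S)} W_n(\kappa(s))$, so if $M \in \BKn(R)$ then $M \otimes_{W_n(S)} W_n(\kappa(s)) \in \BKn(\kappa(s))$ for every $s \in \Max(S)$.

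For the converse I would pass to the corresponding group scheme $G := \M^{-1}(M) \in \BTtor(R)$, which is killed by $p^n$ because $M$ is. Since $\M$ is compatible with base change, $\M(G \otimes_R \kappa(s)) \cong M \otimes_{W_n(S)} W_n(\kappa(s))$. Proposition \ref{BTn=BKn R perfekt} applied over the perfect field $\kappa(s)$ translates the hypothesis $M \otimes W_n(\kappa(s)) \in \BKn(\kappa(s))$ into $G \otimes_R \kappa(s) \in \BTn(\kappa(s))$, and Lemma \ref{characterization of BTn} then yields $G \in \BTn(R)$.

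From $G \in \BTn(R)$ I would extract two pieces of information about $M$. First, by Remark \ref{shorten}, $G[p] \in \BT_1(R)$, so Proposition \ref{BTe=BKe} gives $M/pM = \M(G[p]) \in \BKe(R)$; in particular $M/pM$ is finite projective (hence flat) over $S$. Second, the defining equality $G[p] = p^{n-1}G$ together with the companion equality $G[p^{n-1}] = pG$ (which follows globally from the fibrewise $\BTn$ property via Lemma \ref{exact sequences for finite locally free group schemes} applied to $G \xrightarrow{\cdot p} G \xrightarrow{\cdot p^{n-1}} G$, since over each perfect residue field both exactnesses hold) translate, under the contravariant $\M$, into the submodule equalities $pM = M[p^{n-1}]$ and $p^{n-1}M = M[p]$ in $M$. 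Read off from the periodic free resolution $\cdots \xrightarrow{p} W_n(S) \xrightarrow{p^{n-1}} W_n(S) \xrightarrow{p} W_n(S) \to S \to 0$ of $S$ over $W_n(S)$, these are precisely the vanishings $\Tor_i^{W_n(S)}(S, M) = 0$ for $i = 1, 2$.

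The conclusion follows from the local criterion of flatness applied to the nilpotent ideal $(p) \subset W_n(S)$: $M$ is flat over $W_n(S)$ if and only if $M/pM$ is flat over $S$ and $\Tor_1^{W_n(S)}(S, M) = 0$. Both conditions are now in hand, so $M$ is flat; combined with finite presentation over $W_n(S)$ (inherited from $M \in \BKtor(R)$ and $p^n M = 0$), this gives $M \in \BKn(R)$. I expect the main delicate step to be the translation under the contravariant $\M$, where one must carefully identify equal subobjects of $G$ with equal quotients of $M$ and keep track of kernels and images getting swapped.
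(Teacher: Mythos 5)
Your proposal is correct and follows essentially the same route as the paper: both reduce, via the local criterion of flatness for the ideal $(p) \subseteq W_n(S)$, to showing that $M/pM$ is finite projective over $S$ (which you get from $G[p]\in\BTe(R)$ and Proposition \ref{BTe=BKe}, while the paper gets it from the fibrewise criterion Lemma \ref{characterization of BKe} together with a lemma of Lau) and that $\Tor_1^{W_n(S)}(S,M)=0$, the latter being checked exactly as you do by translating the exactness of $M \xrightarrow{\cdot p^{n-1}} M \xrightarrow{\cdot p} M$ through the exact, base-change-compatible $\M$ to the fibres. Your extra computation of $\Tor_2$ is harmless but not needed for the local flatness criterion.
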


\begin{proof} One direction is clear. Conversely, assume that all fibres of $M=(M,\varphi,\psi)$ are $\BKn$-modules. We have to show that $M$ is a finite projective $W_n(S)$-module.
The assumption $M \otimes_{W_n(S)} W_n(\kappa(s)) \in \BT_n(\kappa(s))$ implies that $M/p \otimes_S \kappa(s) = (M \otimes_{W_n(S)} W_n(\kappa(s)))/p $ is an object of $ \BTe(\kappa(s))$ for all $s \in \Max(R/p)$ by \cite{L2} Lemma 6.2. This shows that $M/p \otimes_S \kappa(s)$ is an object of $\BKe(\kappa(s))$. Consequently, $M/p \in \BKe(R)$ by Lemma \ref{characterization of BKe} and $M/p$ is a finite projective $S$-module. By the local flatness criterion we are reduced to show that 
\begin{gather*}
\Tor_1^{W_n(S)}(S,M)=0.
\end{gather*}
This is the case if and only if the sequence
\begin{gather*}
M \xrightarrow{ \cdot p^{n-1}} M \xrightarrow{ \cdot p} M
\end{gather*}
is exact. But since the anti-equivalence $\M$ (of Theorem \ref{equivalence for BTtor}) and an inverse are exact and commute with base change, this is equivalent to 
\begin{gather*}
M\otimes_{W_n(S)} W_n(\kappa(s)) \xrightarrow{\cdot p^{n-1}} M\otimes_{W_n(S)} W_n(\kappa(s)) \xrightarrow{ \cdot p} M\otimes_{W_n(S)} W_n(\kappa(s))
\end{gather*}
being exact for all $s \in \Max(W_n(S))$ by Lemma \ref{exact sequences for finite locally free group schemes}. This is true because $M\otimes_{W_n(S)} W_n(\kappa(s))$ is a finite projective $W_n(\kappa(s))$-module by assumption.
\end{proof}

The fibrewise description then leads to

\begin{prop}\label{BTn=BKn}
The anti-equivalence $\M \colon \BTtor(R) \cong \BKtor(R)$ of Theorem \ref{equivalence for BTtor}, induces an anti-equivalence $\BTn(R) \cong \BKn(R)$ for all $n \geq 1$.
\end{prop}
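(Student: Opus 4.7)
The plan is to reduce the statement to the fibrewise characterizations already established and then invoke the known anti-equivalence over perfect fields. Let $G \in \BTtor(R)$ and write $M = \M(G) \in \BKtor(R)$.

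First, since $\M$ is exact, compatible with base change, and admits an exact base change compatible inverse (Theorem \ref{equivalence for BTtor}), the multiplication by $p^n$ map on $G$ is zero if and only if multiplication by $p^n$ on $M$ is zero. Thus the annihilation by $p^n$ condition is preserved in both directions. Moreover, for any $s \in \Max(R/p) = \Max(S)$, base change along $R \to \kappa(s)$ (which is a morphism of perfectoid rings since $\kappa(s)$ is perfect of characteristic $p$) identifies $\M(G \otimes_R \kappa(s))$ with $M \otimes_{W_n(S)} W_n(\kappa(s))$.

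Next, I would argue as follows. Assume $G \in \BTn(R)$. Then $G$ is killed by $p^n$, so $M$ is killed by $p^n$. By Lemma \ref{characterization of BTn}, for every $s \in \Max(R/p)$ the fibre $G \otimes_R \kappa(s)$ lies in $\BTn(\kappa(s))$. Applying Proposition \ref{BTn=BKn R perfekt} over the perfect field $\kappa(s)$, we deduce that $\M(G \otimes_R \kappa(s)) = M \otimes_{W_n(S)} W_n(\kappa(s))$ lies in $\BKn(\kappa(s))$. Hence by Lemma \ref{characterization of BKn}, $M \in \BKn(R)$. Conversely, if $M \in \BKn(R)$, then $M$ is killed by $p^n$, so $G$ is killed by $p^n$, and the same fibrewise argument in reverse combined with Proposition \ref{BTn=BKn R perfekt} and Lemma \ref{characterization of BTn} shows $G \in \BTn(R)$.

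Since the subcategories are full by definition, the induced functor between them is automatically fully faithful; essential surjectivity has just been verified by the above fibrewise matching. This establishes the anti-equivalence. The only subtle points I expect are purely bookkeeping: making sure the base change identification $\M(G \otimes_R \kappa(s)) \cong M \otimes_{W_n(S)} W_n(\kappa(s))$ is correctly phrased (which follows from the base change compatibility of $\M$ together with the identification $W_n(S)/\xi = R/p^n$), and that the case $n=1$ is already subsumed by Proposition \ref{BTe=BKe}, so we may focus on $n \geq 2$ in the argument above without loss of generality.
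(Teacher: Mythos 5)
Your proposal is correct and follows essentially the same route as the paper: the paper's proof is exactly the combination of the fibrewise characterizations (Lemmas \ref{characterization of BTn} and \ref{characterization of BKn}), the exactness and base change compatibility of $\M$, and the perfect (field) case from Proposition \ref{BTn=BKn R perfekt}. You have merely spelled out the details that the paper leaves as a formal verification.
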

\begin{proof}
This follows formally from Lemma \ref{characterization of BTn} and Lemma \ref{characterization of BKn} using the exactness and base change properties of the anti-equivalence $\M$ and the classification in the perfect case as stated in Proposition \ref{BTn=BKn R perfekt}. 
\end{proof}

\subsection{Normal representations for $\BKn$-modules}
The classification of $\BTn(R)$ via $\BKn$-modules at hand, we show that any $\BTn$-group can be lifted to a $\BT$-group.
We fix some $n \in \N \cup \lbrace \infty \rbrace$  and use the notation $W_\infty (R): = W(R), \ \BK_\infty (R) := \BK(R)$ and $ \BT_\infty := \BT(R)$ for a fixed perfectoid ring $R=W(S) / \xi$.

\begin{defn}
A \emph{normal representation} for a $\BKn$-module $(M,\varphi,\psi)$ over $R$ is a triple $(L,P,\Phi)$ where $L \subseteq M$ and $P \subseteq M$ are direct summands with $M=L \oplus P$ and $\Phi \colon M \to M^\sigma$ is a $W_n(S)$-linear bijection, such that, with respect to the decompositions $M=L \oplus P$ and $M^\sigma = \Phi(L)\oplus \Phi(P)$, we have $\psi = \alpha \oplus \xi \beta$ and $\varphi = \xi \alpha^{-1} \oplus \beta^{-1}$, where $\alpha = \Phi|_L \colon L \cong \Phi(L)$ and $\beta = \Phi|_P  \colon P \cong  \Phi(P)$.
\end{defn}

\begin{rem}
Every triple $(L,P,\Phi)$, where $P$ and $L$ are finite projective $\Wn(S)$-modules and $\Phi \colon P \oplus L \cong (P \oplus L)^\sigma$ is an isomorphism, determines a unique $\BKn$-module  over $R$, which we call the \emph{$\BKn$-module associated with $(L,P,\Phi)$}.
\end{rem}

The key ingredient to lifting $\BKn$-modules is the fact that every $\BKn$-module admits a normal representation. We need the following version of Nakayama's lemma for the proof.

\begin{lemma}\label{Nakayama for proj Modules}
Let $A$ be a ring and let $g \colon M \to N$ be a morphism of $A$-modules with $M$ finitely generated and $N$ finite projective. In both of the following cases $g$ is an isomorphism:
\begin{enumerate}
\item
$g \otimes_A \kappa(x) \colon M \otimes_A \kappa(x) \to N \otimes_A \kappa(x)$
is an isomorphism for all closed points $x \in \Spec(A)$.
\item $g \otimes_A A/I \colon M/IM \to N/IN$ is an isomorphism for some ideal $I\subseteq A$ contained in the Jacobson radical of $A$.
\end{enumerate}
\end{lemma}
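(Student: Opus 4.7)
The plan is to handle both parts in parallel by attacking the cokernel first and then the kernel, with the key conceptual input being that $N$ is projective, so once $g$ becomes surjective the short exact sequence $0 \to \ker(g) \to M \to N \to 0$ splits. Both parts reduce to classical Nakayama-type vanishing.

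First I would prove surjectivity. Since $N$ is finitely generated and $M$ is finitely generated, $\coker(g)$ is a finitely generated $A$-module. In case (2), the hypothesis that $g \otimes_A A/I$ is surjective gives $\coker(g) = I \cdot \coker(g)$, so $\coker(g) = 0$ by the usual Nakayama lemma applied to an ideal contained in the Jacobson radical. In case (1), for every maximal ideal $\mathfrak{m} \subseteq A$ we have $\coker(g) \otimes_A \kappa(\mathfrak{m}) = \coker(g \otimes_A \kappa(\mathfrak{m})) = 0$ by right exactness of $-\otimes\kappa(\mathfrak{m})$, and this forces the localization $\coker(g)_\mathfrak{m} = 0$ by local Nakayama at $\mathfrak{m}$; since this holds for every maximal ideal, $\coker(g) = 0$. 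Either way, $g$ is surjective.

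Next I would invoke projectivity of $N$: the resulting short exact sequence $0 \to \ker(g) \to M \to N \to 0$ splits, so $M \cong \ker(g) \oplus N$ and in particular $\ker(g)$ is a quotient of the finitely generated module $M$, hence finitely generated. Because the sequence is split, tensoring with any $A$-module preserves exactness, and thus $\ker(g) \otimes_A T = \ker(g \otimes_A T)$ for $T \in \{A/I, \kappa(x)\}$. In case (2) the hypothesis yields $\ker(g)/I\ker(g) = 0$, and Nakayama gives $\ker(g) = 0$. In case (1), $\ker(g) \otimes_A \kappa(\mathfrak{m}) = 0$ for every maximal $\mathfrak{m}$, and local Nakayama at each $\mathfrak{m}$ together with the local-global principle for vanishing of a module forces $\ker(g) = 0$.

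There is no genuine obstacle here beyond bookkeeping; the only subtlety I want to flag is that without projectivity of $N$ one cannot conclude $\ker(g) \otimes_A \kappa(x) = 0$ from the mere bijectivity of $g \otimes_A \kappa(x)$, since tensor is only right exact. The splitting step is thus the crucial use of the hypothesis that $N$ is finite projective, and it is what allows the same proof to cover both (1) and (2) uniformly.
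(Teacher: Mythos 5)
Your proof is correct, but it is genuinely more self-contained than the paper's: the paper simply cites \cite{AB} Lemma 4.5.3 for part (1) and observes that part (2) follows from part (1), since an ideal $I$ contained in the Jacobson radical lies in every maximal ideal, so each residue field $\kappa(x)$ at a closed point factors through $A/I$ and bijectivity of $g \otimes_A A/I$ implies bijectivity of each $g \otimes_A \kappa(x)$. You instead prove both cases directly and in parallel, and your argument isolates the real content: Nakayama kills the finitely generated cokernel, and then projectivity of $N$ splits $0 \to \ker(g) \to M \to N \to 0$, which is exactly what makes $\ker(g)$ finitely generated and makes its formation commute with $-\otimes_A \kappa(x)$ (or $-\otimes_A A/I$), so that Nakayama applies once more. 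Your flagged subtlety --- that without projectivity one cannot pass from bijectivity of $g \otimes_A \kappa(x)$ to vanishing of $\ker(g) \otimes_A \kappa(x)$, tensor being only right exact --- is precisely the point, and the paper's citation hides it. The only thing your write-up buys at a small cost is length; what it gains is that the lemma no longer depends on an external reference. One could streamline slightly by proving only case (1) your way and then deducing case (2) by the factorization through $A/I$ noted above, which is the reduction the paper has in mind.
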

\begin{proof}
2. follows from 1. which in turn is \cite{AB} Lemma 4.5.3.
\end{proof}

\begin{lemma}
Every $\BKn$-module $(M,\varphi,\psi)$ over $R$ admits a normal representation.
\end{lemma}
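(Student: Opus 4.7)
The plan is to first produce the decomposition data modulo the ideal $I := (p, \xi) \subseteq W_n(S)$ using the fibrewise $\BKe$-structure of the reduction, and then to lift it stepwise to $M$ via idempotent lifting combined with Lemma \ref{Nakayama for proj Modules}. The key step is the construction of a complement $P^* \subseteq M^\sigma$ of $L^* := \psi(L)$ such that $\varphi$ restricts to an isomorphism $P^* \to P$.

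The reduction $\bar M := M/IM$ is a $\BKe$-module over $S/\xio$: by Lemma \ref{characterization of BKn}, $M/p$ is a $\BKe$-module over $R$, and reducing further by $\xio$ yields $\bar M$. Lemma \ref{fibrewise for BTe} then provides direct summands $\bar P := \ker(\bar\psi) = \im(\bar\varphi) \subseteq \bar M$ and $\bar L^* := \ker(\bar\varphi) = \im(\bar\psi) \subseteq \bar M^\sigma$ with finite projective complements $\bar L$ and $\bar P^*$; the restrictions $\bar\psi|_{\bar L}: \bar L \to \bar L^*$ and $\bar\varphi|_{\bar P^*}: \bar P^* \to \bar P$ are isomorphisms.

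Since $M$ is finite projective over $W_n(S)$, $I$ is contained in the Jacobson radical of $W_n(S)$ (using $p^n = 0$ in $W_n(S)$ and that $S$ is $\xio$-adically complete), and $W_n(S)$ is $I$-adically complete, we may lift the idempotent $\bar\pi_L \in \mathrm{End}_{S/\xio}(\bar M)$ projecting onto $\bar L$ along $\bar P$ to an idempotent $\pi_L \in \mathrm{End}_{W_n(S)}(M)$, obtaining $M = L \oplus P$ with $L := \im(\pi_L)$, $P := \ker(\pi_L)$. Setting $L^* := \psi(L)$, the map $\psi|_L: L \to L^*$ reduces to the iso $\bar\psi|_{\bar L}$, so by Lemma \ref{Nakayama for proj Modules} it is itself an iso and $L^*$ is a direct summand of $M^\sigma$ lifting $\bar L^*$.

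It remains to choose a complement $P^*$ of $L^*$ in $M^\sigma$ with $\varphi(P^*) \subseteq P$. Any two such complements of $L^*$ differ by a map $f: P^* \to L^* = \psi(L)$, and since $\varphi\psi = \xi \cdot \id$, the adjustment of $P^*$ by $f$ shifts $\varphi|_{P^*}$ by a $\xi$-multiple; an iterative correction exploiting the $I$-adic completeness of $W_n(S)$ produces the required $P^*$, whereupon $\varphi|_{P^*}: P^* \to P$ is an iso by Lemma \ref{Nakayama for proj Modules}. Setting $\Phi|_L := \psi|_L$ and $\Phi|_P := (\varphi|_{P^*})^{-1}$ gives the desired isomorphism $\Phi: M \to M^\sigma$, and the remaining identities $\psi|_P = \xi \Phi|_P$ and $\varphi|_{L^*} = \xi(\Phi|_L)^{-1}$ then follow algebraically from $\varphi\psi = \psi\varphi = \xi\,\id$. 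The main obstacle is the iterative adjustment of $P^*$: the $L$-component of the error lies in $IL = pL + \xi L$, and while the $\xi L$-part is directly cancellable by the $\xi$-multiple adjustments, absorbing the $pL$-part requires careful use of the distinguished-element relation $\xi - [\xio] = p \cdot u$ (with $u \in W_n(S)^\times$) together with the $I$-adic completeness.
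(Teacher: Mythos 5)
Your setup (reduction mod $I=(p,\xi)$, idempotent lifting, Lemma \ref{Nakayama for proj Modules}) matches the paper's, but the step you yourself flag as the main obstacle is a genuine gap, and the iteration you sketch does not close it. Once you have fixed $M=L\oplus P$ by lifting the idempotent $\bar\pi_L$, the obstruction to finding a complement $P^*$ of $L^*=\psi(L)$ with $\varphi(P^*)\subseteq P$ is the class of $\pi_L\circ\varphi|_{P^*_0}$ in $\Hom(P^*_0,IL)/\xi\Hom(P^*_0,L)$: replacing $P^*_0$ by the graph of $f=\psi\circ g$ changes the $L$-component of $\varphi$ exactly by $\xi g$, and by nothing more. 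Since $I L=pL+\xi L$ and $p\notin\xi \Wn(S)$ for $n\geq 2$ (indeed $\Wn(S)/\xi=R/p^n$, where $p\neq 0$ in general), this class need not vanish. Your iteration does not converge: writing $p=v^{-1}(\xi-[\xio])$ turns an error in $pL$ into one in $[\xio]L$, and $[\xio]=\xi-pv$ turns it back into one in $pL$ of the \emph{same} size; the error oscillates rather than tending to $0$ in the $I$-adic topology. Concretely, take $R=\OC$, $n=2$, $M=\M(\mu_{p^2}\oplus\underline{\Z/p^2\Z})=W_2(\OCb)^2$ with $\varphi=\diag(1,\xi)$, $\psi=\diag(\xi,1)$, $L=0\oplus W_2(\OCb)$, and $P=\{(y,py)\}$; the projection onto $L$ along $P$ is a perfectly legitimate idempotent lift of $\bar\pi_L$, yet a complement $P^*=\{(x,dx)\}$ of $L^*$ satisfies $\varphi(P^*)\subseteq P$ only if $\xi d=p$ in $W_2(\OCb)$, which is impossible since $p\neq 0$ in $W_2(\OCb)/\xi=\OC/p^2$. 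So for a general output of idempotent lifting the required $P^*$ simply does not exist.

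The paper avoids this by not fixing $P$ in advance: it lifts $\overline{Q}$ (your $\bar P^*$) to a direct summand $Q\subseteq M^\sigma$, \emph{defines} $P:=\varphi(Q)$, and then checks via Lemma \ref{Nakayama for proj Modules} that $L\oplus\varphi(Q)=M$ and that $\varphi|_Q\colon Q\to\varphi(Q)$ is an isomorphism (the latter only after the former, which guarantees $\varphi(Q)$ is finite projective --- note that your appeal to Lemma \ref{Nakayama for proj Modules} for $\psi|_L\colon L\to\psi(L)$ has the same issue, since $\psi(L)$ is not yet known to be projective at that point). Reordering your construction in this way --- choose $L$ and $Q$ first, then let $P$ and $L^*$ be the images $\varphi(Q)$ and $\psi(L)$ --- repairs the argument and eliminates the iteration entirely.
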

\begin{proof}
Let $M=(M, \varphi, \psi) \in \BKn(R)$ and $I := \ker(\Wn(S) \to S/ \xio) = (\xi,p)$ which is contained in the Jacobson radical of $\Wn(S)$. The pair $(\Wn(S),I)$ even is Henselian since the pairs $(\Wn(S),(p))$ and $(S,(\xi))$ have this property. 
Consider $\overline{M}=( \overline{M}, \overline{\varphi}, \overline{\psi})$, the base change of $M$ to $S/ \xio$. Choose submodules $\overline{L} \subseteq \overline{M}$ and $\overline{Q} \subseteq \overline{M}^\sigma$ such that $\overline{M}=  \overline{L} \oplus \im(\overline{\varphi})$ and $\overline{M}^\sigma = \im(\overline{\psi}) \oplus \overline{Q}$.
We have isomorphisms of $S/ \xio$-modules $\overline{\alpha} \colon \overline{L} \to \im(\overline{\psi})$ and $\overline{\beta} \colon \im(\overline{\varphi}) \to \overline{Q} $ which are induced by  $\overline{\psi}$ and $\overline{\varphi}^{-1}$  respectively.
  We claim that we can find a direct summand $Q \subseteq M^\sigma$ such that $Q/ I Q = \overline{Q}$.
   Indeed, let $Q'$ be any finite projective $W_n(S)$-module such that $Q'/ I Q' = \overline{Q}$, which exists since the pair $(\Wn(S),I)$ is Henselian.
    We can lift the maps $\overline{Q} \to \overline{M}^\sigma \to \overline{Q}$ to maps $Q' \to M^\sigma \to Q'$ whose composition is an isomorphism by Nakayama. 
    Then $Q \subseteq M^\sigma$, the image of the injection $Q' \to M^\sigma$, does the trick.
    The same argument shows that we can find a direct summand $L \subseteq M$ such that $L/ IL = \overline{L}$. Consider the commutative diagram
\begin{gather*}
 \begin{xy}
  \xymatrix{
 Q \ar@{->>}[r]^{\varphi} \ar@{->>}[d]^{/ I} & \varphi(Q) \ar@^{(->}[r] \ar[d] & M \ar@{->>}[d]^{/ I} \\
\overline{Q} \ar[r]^{\overline{\beta}^{-1}} & \im(\overline{\varphi}) \ar@^{(->}[r]  & \overline{M} .
  }
\end{xy}
 \end{gather*}
 Since $\overline{\beta}^{-1}$ is bijective, we see that $\varphi(Q)/ (I \varphi(Q)) = \im(\overline{\varphi})$. Now consider the map $\varphi(Q) \oplus L \to M$. 
 This is an isomorphism modulo $I$ and hence itself an isomorphism by Lemma \ref{Nakayama for proj Modules}. In particular, $\varphi(Q)$ is a finite projective $\Wn(S)$-module. 
Moreover,  $\varphi|_Q \colon Q \to \varphi(Q)$ is an isomorphism since it is an isomorphism modulo $I$.
  We set $\beta:= ( \varphi|_Q)^{-1}) \colon \varphi(Q) \to Q$.
   The same arguments show that $Q \oplus \psi(L)=M^\sigma$ and that $\alpha := \psi|_L \colon L \to \psi(L)$ is bijective. 
   Now define $\Phi \colon M= L \oplus \varphi(Q) \to \psi(L) \oplus Q = M^\sigma$ to be $\alpha \oplus \beta$. 
    By construction, the triple  $(L, \varphi(Q),\Phi)$ then is a normal representation for $M$.  
\end{proof}

\begin{prop}\label{BK to BKn surjective}
For $n \geq m$ ($m,n \in \N \cup \lbrace \infty \rbrace$) the truncation functor $\BKn(R) \to \BK_m(R)$ is essentially surjective.
\end{prop}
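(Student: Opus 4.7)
The plan is to exploit the just-established existence of normal representations and lift them from $\BK_m(R)$ to $\BKn(R)$. Starting from some $\overline{M} = (\overline{M}, \overline{\varphi}, \overline{\psi}) \in \BK_m(R)$, I would first apply the preceding lemma to fix a normal representation $(\overline{L}, \overline{P}, \overline{\Phi})$, so that $\overline{L}$ and $\overline{P}$ are finite projective $W_m(S)$-modules with $\overline{M} = \overline{L} \oplus \overline{P}$ and $\overline{\Phi} \colon \overline{M} \to \overline{M}^\sigma$ is a $W_m(S)$-linear isomorphism from which $\overline{\varphi}$ and $\overline{\psi}$ can be read off via the block formulas in the definition.

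Next I would produce lifts of $\overline{L}$, $\overline{P}$ and $\overline{\Phi}$ to the $W_n(S)$-level. The surjection $W_n(S) \twoheadrightarrow W_m(S)$ has kernel $p^m W_n(S)$ (using that $S$ is perfect, so $V = pF^{-1}$ and hence $V^m W_n(S) = p^m W_n(S)$). For finite $n$ this ideal is nilpotent, while for $n = \infty$ it lies in the Jacobson radical of $W(S)$ since $W(S)$ is $p$-adically complete; in either case $(W_n(S), p^m W_n(S))$ is a Henselian pair. Therefore finite projective $W_m(S)$-modules lift uniquely to finite projective $W_n(S)$-modules, yielding $L$, $P$ with $L/p^m L = \overline{L}$ and $P/p^m P = \overline{P}$; I then put $M := L \oplus P$. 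Since $M$ and $M^\sigma$ are finite projective and Frobenius twist commutes with the projection $W_n(S) \to W_m(S)$, the natural map $\Hom_{W_n(S)}(M, M^\sigma) \twoheadrightarrow \Hom_{W_m(S)}(\overline{M}, \overline{M}^\sigma)$ is surjective, so I can pick a $W_n(S)$-linear lift $\Phi \colon M \to M^\sigma$ of $\overline{\Phi}$. Lemma \ref{Nakayama for proj Modules} applied to this Henselian pair then forces $\Phi$ to be an isomorphism.

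Finally, by the remark following the definition of normal representations, the triple $(L, P, \Phi)$ determines a $\BKn$-module with underlying module $M$ and structure maps $\psi = \alpha \oplus \xi\beta$, $\varphi = \xi\alpha^{-1} \oplus \beta^{-1}$, where $\alpha = \Phi|_L$ and $\beta = \Phi|_P$. These formulas reduce modulo $p^m$ to the corresponding formulas for the chosen normal representation of $\overline{M}$, so the truncation of this $\BKn$-module to $\BK_m(R)$ recovers $\overline{M}$, proving essential surjectivity.

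The main obstacle I expect is the uniform treatment of the finite-$n$ and $n = \infty$ cases: for finite $n$ the ideal $p^m W_n(S)$ is simply nilpotent, while for $n = \infty$ one has to invoke $p$-adic completeness of $W(S)$ (a consequence of $S$ being perfect) to conclude Henselianness of the relevant pair. Once that point is out of the way, the rest is a formal application of Nakayama, first to lift the modules and then to upgrade the lifted $\Phi$ to an isomorphism.
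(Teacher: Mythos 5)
Your proposal is correct and follows essentially the same route as the paper: choose a normal representation of the given $\BK_m$-module, lift the projective summands and the isomorphism $\Phi$ along $W_n(S)\twoheadrightarrow W_m(S)$, and take the $\BKn$-module associated with the lifted triple. You merely spell out the details the paper leaves implicit (why the pair $(W_n(S),p^mW_n(S))$ is Henselian and why the lifted $\Phi$ is again an isomorphism via Lemma \ref{Nakayama for proj Modules}), which is a harmless and welcome elaboration.
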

\begin{proof}
Let $M $ be  a $ \BK_m$-module over $R$ with normal decomposition $(L,P,\Phi)$. We  choose finite projective $W_n(S)$-modules $\tilde{L}$ and $\tilde{P}$ such that $\tilde{L}/p^m =L$ and $\tilde{P}/p^m = P$. We set $\tilde{M}:=\tilde{L} \oplus \tilde{P}$. 
Moreover, we can lift the map $\tilde{M} \to M \xrightarrow{\Phi}M^\sigma$ along $\tilde{M}^\sigma \to M^\sigma$ to obtain an isomorphism $\tilde{\Phi} \colon \tilde{M} \to \tilde{M}^\sigma$. Let $\tilde{M}=(\tilde{M},\tilde{\varphi},\tilde{\psi})$ be the $\BKn$-module over $R$ associated with $(\tilde{L}, \tilde{P}, \tilde{\Phi})$. 
Then $\tilde{M}/ p^m = M$ as desired.
\end{proof}
Now setting $n= \infty$ and using Proposition \ref{BTn=BKn} and Theorem \ref{equivalence for BT}, we obtain 

\begin{theorem}\label{BT to BTn surjective}
Every $\BTn$-group $G$ over $R$ is of the form $H[p^n]$ for some $\BT$-group $H$ over $R$.
\end{theorem}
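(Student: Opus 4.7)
The plan is to combine the anti-equivalence of Proposition \ref{BTn=BKn} with the lifting result Proposition \ref{BK to BKn surjective} (applied with $n$ replaced by $\infty$ and $m$ by $n$), and then transport the resulting $\BK$-module back through Theorem \ref{equivalence for BT} to obtain a Barsotti--Tate group whose $p^n$-torsion is $G$.

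In detail: starting from $G \in \BTn(R)$, I first set $M := \M(G) \in \BKn(R)$, which is a $\BKn$-module by Proposition \ref{BTn=BKn}. Applying Proposition \ref{BK to BKn surjective} with the parameters $(\infty, n)$ produces a genuine Breuil--Kisin--Fargues module $\tilde M \in \BK(R)$ (i.e. a minuscule one, finite projective over $W(S)$) together with an isomorphism $\tilde M / p^n \cong M$ in $\BKn(R)$. By Theorem \ref{equivalence for BT} there is a $p$-divisible group $H \in \BT(R)$ with $\M(H) \cong \tilde M$.

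It remains to identify $H[p^n]$ with $G$. Multiplication by $p^n$ on the $p$-divisible group $H$ fits into the short exact sequence of fppf sheaves
$$0 \longrightarrow H[p^n] \longrightarrow H \xrightarrow{\ \cdot p^n\ } H \longrightarrow 0.$$
Applying the exact contravariant functor $\M$ of Theorem \ref{equivalence for BTtor} (which, by Remark \ref{relation between the two functors}, restricts to the functor $\M$ on $\BT(R)$) yields
$$0 \longrightarrow \M(H) \xrightarrow{\ \cdot p^n\ } \M(H) \longrightarrow \M(H[p^n]) \longrightarrow 0,$$
so that $\M(H[p^n]) \cong \M(H)/p^n = \tilde M / p^n \cong M \cong \M(G)$. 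Applying the anti-equivalence of Theorem \ref{equivalence for BTtor} one more time gives $H[p^n] \cong G$, proving the claim.

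The argument is essentially formal: the substantive content is entirely packaged in Proposition \ref{BK to BKn surjective}, whose proof via normal representations handles the delicate lifting step. The only remaining point to verify is the compatibility between truncation on the module side ($\tilde M \mapsto \tilde M/p^n$) and $p^n$-torsion on the group side ($H \mapsto H[p^n]$), and this is immediate from the exactness of $\M$ together with the fact that $\cdot p^n \colon H \to H$ is an epimorphism of fppf sheaves for any $p$-divisible group. I therefore do not expect any genuine obstacle in the final step.
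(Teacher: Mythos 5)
Your proposal is correct and follows exactly the paper's (very terse) argument: lift $\M(G)$ to a $\BK$-module via Proposition \ref{BK to BKn surjective} with parameters $(\infty,n)$, pass back to a $\BT$-group via Theorem \ref{equivalence for BT}, and identify the $p^n$-torsion using the compatibility of $\M$ with cokernels of isogenies from Remark \ref{relation between the two functors}. The only detail you spell out beyond the paper is the identification $\M(H[p^n]) \cong \M(H)/p^n$, which is the right thing to check and is handled correctly.
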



\begin{thebibliography}{9}




\bibitem{AB}
  J. Anschütz, A.-C. Le Bras:
  \emph{Prismatic Dieudonn\e Theory,} preprint (2019), arXiv:1907.10525.




\bibitem{BBM}
	P. Berthelot, L. Breen, W. Messing:
	\emph{Th\'{e}orie de Dieudonn\e cristalline II}.
	Lecture Notes in Math. 930, Springer Verlag, 1982.

\bibitem{BS}
  B. Bhatt, P. Scholze:
  \emph{Projectivity of the Witt Vector affine Grassmanian.}
  Invent. Math.209(2017), no. 2, 329423.
 
  
\bibitem{AJ}
	A.J. de Jong:
	\emph{Finite locally free group schemes in characteristic p and Dieudonn\e modules.}
Invent. Math., 114(1): 89-137, 1993.
  
  
    \bibitem{L2}
  E. Lau:
  \emph{Smoothness of the truncated display functor.}
  J. Amer. Math. Soc. 26 (2013), no. 1, 129-165.
  
  \bibitem{L1}
  E. Lau:
  \emph{Dieudonn\e theory for semiperfect rings and perfectoid rings.}
  Compos. Math.154(2018), no. 9, 1974-2004.



\bibitem{SW1}
  P. Scholze, J. Weinstein:
  \emph{
  Berkeley lectures on p-adic geometry}, preprint (2019). Available at http://www.math.uni-bonn.de/people/scholze/Berkeley.pdf.



	




  
  





\end{thebibliography}
\end{document}